\newtheorem{lemma}{Lemma}[section]
\newtheorem{theorem}[lemma]{Theorem}
\newtheorem{proposition}[lemma]{Proposition}
\newtheorem{corollary}[lemma]{Corollary}
\begin{document}

\title[Squeezing function on doubly-connected domains via LDE]{The squeezing function on doubly-connected domains via the Loewner differential equation}

\author{Tuen Wai Ng }
\address{The University of Hong Kong, Pokfulam, Hong Kong}
\email{ntw@maths.hku.hk}

\author{Chiu Chak Tang}
\address{The University of Hong Kong, Pokfulam, Hong Kong }
\email{ChiuChakTang@connect.hku.hk}

\author{Jonathan Tsai}
\address{The University of Hong Kong, Pokfulam, Hong Kong}
\email{jonathan.tsai@cantab.net}

\date{\today}

\keywords{Squeezing function 
\and
Extremal map
\and 
Loewner differential equation 
\and 
Schottky-Klein prime function
}

\subjclass[2010]{30C35 \and 30C75 \and 32F45 \and 32H02}

\maketitle

\begin{abstract}
For any bounded domains $\Omega$ in $\mathbb{C}^{n}$, Deng, Guan and Zhang introduced the squeezing function $S_\Omega (z)$ which is a biholomorphic invariant of bounded domains. We show that for $n=1$,  the squeezing function on an annulus $A_r = \lbrace z \in \mathbb{C} : r <|z| <1 \rbrace$ is given by $S_{A_r}(z)= \max \left\lbrace |z| ,\frac{r}{|z|} \right\rbrace$ for all $0<r<1$. This disproves the conjectured formula for the squeezing function proposed by Deng, Guan and Zhang and establishes (up to biholomorphisms) the squeezing function for all doubly-connected domains in $\mathbb{C}$ other than the punctured plane. It provides the first non-trivial formula for the squeezing function for a wide class of plane domains and answers a question of Wold. Our main tools used to prove this result are the Schottky-Klein prime function  (following the work of Crowdy) and a version of the Loewner differential equation on annuli due to Komatu. We also show that these results can be used to obtain lower bounds on the squeezing function for certain product domains in $\mathbb{C}^{n}$.\end{abstract}


\section{Introduction}
\label{sect:intro}

In 2012, Deng, Guan and Zhang \cite{squ_def1} introduced the squeezing function of a bounded domain $\Omega$ in $\mathbb{C}^n$ as follows. For any $z \in \Omega$, let $\mathcal{F}_{\Omega}(z)$ be the collection of all embeddings $f$ from $\Omega$ to $\mathbb{C}^n$ such that $f(z)=0$. Let $B(0;r)= \left\lbrace z \in \mathbb{C}^n \: : \: \| z \| < r \right\rbrace $ denote the $n$-dimensional open ball centered at the origin $0$ with radius $r>0$. Then the squeezing function $S_\Omega (z)$ of $\Omega$ at $z$ is defined to be
\[ 
S_\Omega (z) 
= 
\sup\limits_{f \in \mathcal{F}_{\Omega}(z)}
\left\lbrace 
\frac{a}{b} 
\: : \:
B(0;a) \subset  f(\Omega ) \subset B(0;b)
\right\rbrace.
\]
 \paragraph{Remark:}
\begin{enumerate}
    \item For the supremum in the definition of the squeezing function, we can restrict the family $\mathcal{F}_{\Omega}(z)$ to the subfamily of functions $f$ such that $f(\Omega)$ is bounded.
    \item For any $\lambda \neq 0$, we have $f \in \mathcal{F}_{\Omega}(z)$ if and only if $\lambda f \in \mathcal{F}_{\Omega}(z)$. As a consequence, we may assume that $b=1$.
\end{enumerate}

It is clear from the definition that the squeezing function on $\Omega$ is positive and bounded above by $1$. Also, it is invariant under biholomorphisms, that is, $S_{g(\Omega)} (g(z)) =S_{\Omega} (z)$ for any biholomorphism $g$ of $\Omega$. 
If the squeezing function of a domain $\Omega$ is bounded below by a positive constant, i.e., if there exists a positive constant $c$ such that $S_\Omega (z) \geq c >0$ for all $z \in \Omega$, then the domain $\Omega$ is said to be \textit{holomorphic homogeneous regular} by Liu, Sun and Yau \cite{liu2004canonical} or with \textit{uniform squeezing property} by Yeung \cite{yeung2009geometry}. The consideration of such domains appears naturally when one applies the Bers embedding theorem to the Teichm\"uller space of genus $g$ hyperbolic Riemann surfaces.    

The squeezing function is interesting because it provides some geometric information about the domain $\Omega$. 
For instance, Joo and Kim proved in  \cite{joo2016boundary} that if $\Omega \subset \mathbb{C}^2$ is a bounded domain with smooth pseudoconvex boundary and if $p \in \partial \Omega$ is of finite type such that $\lim_{\Omega \ni z \to p} S_{\Omega} (z)=1$, then $\partial \Omega$ is strictly pseudoconvex at $p$.
For another instance, Zimmer showed in \cite{zimmer2018gap} and \cite{zimmer2019characterizing} that if $\Omega \subset \mathbb{C}^n$ is a bounded convex domain with $C^{2,\alpha}$ boundary and $K$ is a compact subset of $\Omega$ such that $S_\Omega (z) \geq 1 - \epsilon$ for every $z \in \Omega \setminus K$ and for some positive constant $\epsilon = \epsilon (n)$, then $\Omega$ is strictly pseudoconvex. 
In addition to providing geometric information, the squeezing function is related to some estimates of intrinsic metrics on $\Omega$. 
For example, in \cite{deng2016properties}, Deng, Guan and Zhang showed that  
\[ S_\Omega (z) K_\Omega (z, v) \leq C_\Omega (z,v) \leq  K_\Omega (z,v) \]
for any point $z$ in $\Omega$ and for any tangent vector $v \in T_z{\Omega}$,  
where $C_\Omega$ and $K_\Omega$ denote the Carath\'{e}odory seminorm and Kobayashi seminorm on $\Omega$ respectively. 
For other properties and applications of squeezing functions, see \cite{squ_def1,fornaess2016estimate,fornaess2018domain,fornaess2015estimate,fornaess2016non,kim2016uniform,nikolov2018behavior,nikolov2017boundary,zimmer2018smoothly}.

Given a bounded domain $\Omega \subset \mathbb{C}^n$, it is then natural to ask whether one can estimate or even compute the precise form for the squeezing function $S_\Omega (z)$ on $\Omega$. In \cite{arosio2017squeezing},
Arosio, Forn{\ae}ss, Shcherbina and Wold  provided an estimate of $S_\Omega (z)$ for $\Omega= \mathbb{P}^1 \backslash K$ where $K$ is a Cantor set. In \cite{squ_def1}, Deng, Guan and Zhang showed that the squeezing functions of classical symmetric domains are certain constants (using a result of Kubota in \cite{kubota1982note}); they also  showed that the squeezing function of the $n$-dimensional punctured unit ball $B(0;1) \setminus \{0\}$ is given by $S_{B(0;1) \setminus \{0\}} (z)=\| z \| $. 

We now consider the $n=1$ case, and introduce the following notation which will be used in this paper. $\mathbb{D}_{r}= \{z\in\mathbb{C}:|z|<r\}$, the disk of radius $r$ centered at $0$, and $\mathbb{D}=\mathbb{D}_{1}$; $C_{r}= \{z\in\mathbb{C}:|z|=r\}$, the circle of radius $r$ centered at $0$; and $A_{r}= \{z\in\mathbb{C}:r<|z|<1\}$, the annulus with inner radius $r$ and outer radius $1$.

By the Riemann mapping theorem, the simply-connected case is trivial: $S_{D}(z)\equiv 1$ for any simply-connected domain $D$. In \cite{squ_def1},
 Deng, Guan and Zhang  considered the squeezing function of an annulus $A_r$. They conjectured that for any $z \in A_r$ with $|z| \geq \sqrt{r}>0$,
\[  S_{A_r} (z) = \sigma^{-1} \left( \log \dfrac{(1+|z|)(1-r)}{(1-|z|)(1+r)} \right) \]  
where
\[  \sigma (z)=  \log \dfrac{1+|z|}{1-|z|}.\] 
In this paper, we will disprove this conjecture by establishing the formula for $S_{A_{r}}(z)$. This also answers a question asked by Wold about the precise form for $S_{A_r}(z)$ in his lecture given in the Mini-workshop on Complex Analysis and Geometry at the Institute for Mathematical Sciences, NUS in May 2017.

\begin{theorem}
\label{MainResult1}
For $0<r<1$, and $r<|z|<1$,
\[ S_{A_r} (z) = \max \left\lbrace |z| , \frac{r}{|z|} \right\rbrace. \]
\end{theorem}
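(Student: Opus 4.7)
The plan is to reduce by the symmetries of $A_r$ and then prove the two inequalities $S_{A_r}(\rho)\ge\rho$ and $S_{A_r}(\rho)\le\rho$ by complementary methods. Every rotation $z\mapsto e^{i\theta}z$ is a biholomorphism of $A_r$, so $S_{A_r}(z)$ depends only on $|z|$ and I may take $z=\rho$ positive real. The involution $\iota(z)=r/z$ is a further biholomorphism of $A_r$ that sends $\rho$ to $r/\rho$, so up to $\iota$ I may further assume $\sqrt r\le\rho<1$, in which case $\max\{\rho,r/\rho\}=\rho$; the goal is then $S_{A_r}(\rho)=\rho$.

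For the lower bound, I would construct an explicit embedding $f_\rho\colon A_r\to\mathbb{C}$ with $f_\rho(\rho)=0$, $f_\rho(A_r)\subset\mathbb{D}$, and $\mathbb{D}_\rho\subset f_\rho(A_r)$. Following Crowdy's constructions on circular multiply-connected domains, the Schottky-Klein prime function $P(z,\zeta;r)$ of $A_r$ is the natural ingredient; a reasonable ansatz is a normalised ratio
\[
f_\rho(z)\;=\;c\,\frac{P(z/\rho,\,1;\,r)}{P(\rho z,\,1;\,r)}
\]
with $c\in\mathbb{C}\setminus\{0\}$ chosen so that the outer boundary of the image is the unit circle. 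The numerator contributes a simple zero at $z=\rho$, the denominator is non-vanishing on $A_r$, and the standard transformation laws of $P$ under $z\mapsto r^{2}z$ and complex conjugation should force the image to be $\mathbb{D}\setminus\overline{D_\rho}$, where $D_\rho\subset\mathbb{D}$ is a closed topological disk tangent externally to $C_\rho$ at the single point $-\rho$; in particular $\mathbb{D}_\rho\subset f_\rho(A_r)$.

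For the upper bound, I would use Komatu's version of the Loewner differential equation. The extremal map $f_{\rho,r'}$ built above depends smoothly on the inner radius $r'$ of the source annulus: for every $r'\in(0,r]$ one obtains $f_{\rho,r'}\colon A_{r'}\to\mathbb{D}$ with $f_{\rho,r'}(\rho)=0$ whose image is $\mathbb{D}$ minus a disk externally tangent to $C_\rho$ at $-\rho$, and as $r'\searrow 0$ these degenerate to the Möbius map $(z-\rho)/(1-\rho z)$ from $\mathbb{D}^{*}$ onto $\mathbb{D}\setminus\{-\rho\}$. The Komatu equation presents this one-parameter family as a Loewner-type flow whose driving function is explicit in terms of $P$. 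Given any candidate embedding $f\colon A_r\to\mathbb{D}$ with $f(\rho)=0$ and $\mathbb{D}_a\subset f(A_r)$, the strategy is to run the inverse Komatu flow starting from $f$, producing a family $f_t\colon A_{r(t)}\to\mathbb{D}$ with $f_t(\rho)=0$, $r(t)\searrow 0$, and whose inscribed radius at $0$ is monotone; in the limit one extracts an embedding of $\mathbb{D}^{*}$ containing $\mathbb{D}_a$ with $\rho\mapsto 0$, and Deng--Guan--Zhang's identity $S_{\mathbb{D}^{*}}(\rho)=\rho$ forces $a\le\rho$.

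The hardest step is this upper bound: exhibiting the Komatu flow for a general $f$, controlling the inscribed radius at $0$ along it (in particular verifying the monotonicity of that radius by a Schwarz-type estimate derived from the driving function), and passing cleanly to the $\mathbb{D}^{*}$ limit, all demand delicate bookkeeping with the Komatu equation and the transformation rules of the Schottky-Klein prime function. The lower bound is, by comparison, a computation once the correct ratio is guessed, although even there the verification of external tangency of $D_\rho$ to $C_\rho$ is itself a non-trivial identity for $P$.
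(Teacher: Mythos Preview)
Your symmetry reduction is fine and matches the paper. The lower bound idea --- use a Schottky--Klein ratio to produce an explicit map sending $\rho$ to $0$ --- is also the paper's, but your description of the image is wrong: the map $f(\cdot,\rho)=\omega(\cdot,\rho)/(\rho\,\omega(\cdot,\rho^{-1}))$ sends $A_r$ onto a \emph{circularly slit disk} $\mathbb{D}\setminus L$ where $L$ is a proper arc of the circle $C_\rho$, not onto $\mathbb{D}$ minus a topological disk tangent to $C_\rho$. The inner circle $C_r$ collapses to an arc, not to a Jordan curve. This does not hurt the inequality $S_{A_r}(\rho)\ge\rho$, since $\mathbb{D}_\rho\subset\mathbb{D}\setminus L$ still holds, but it does mean your proposed identity ``external tangency of $D_\rho$ to $C_\rho$'' is not the right one to verify; the relevant computation is that $|f(z,\rho)|=\rho$ for $|z|=r$, which the paper obtains in two lines from the functional equations of $\omega$.

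The upper bound is where the real gap lies. Your plan is to run an ``inverse Komatu flow'' that shrinks the inner radius $r(t)\searrow 0$ while keeping $f_t(\rho)=0$, hope the inscribed radius at $0$ is monotone along the flow, and then invoke $S_{\mathbb{D}^*}(\rho)=\rho$ in the limit. There is no mechanism here: the Komatu equation governs conformal maps as slits \emph{grow} in a fixed annulus, which \emph{increases} the conformal inner radius of the complement, and there is no canonical way to attach an arbitrary competitor $f$ to a one-parameter family with $r(t)\to 0$. Even granting such a family, the monotonicity of the inscribed radius is exactly the content of the theorem and you have given no reason for it. The paper takes a different route. It first uses a Schwarz-lemma argument to reduce from arbitrary competitors $f\in\mathcal{F}_{A_r}(\rho)$ with $f(A_r)\subset\mathbb{D}$ to those with $f(\partial\mathbb{D})=\partial\mathbb{D}$ (you omit this step entirely). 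For such $f$, write $f(A_r)=\mathbb{D}\setminus E$; the upper bound then amounts to showing $\min_{w\in E}|w|\le |f^{-1}(0)|$. The paper proves the contrapositive as a stand-alone theorem: if every point of $E$ has modulus at least $y$, then $|g^{-1}(0)|\ge y$ for the conformal map $g\colon A_r\to\mathbb{D}\setminus E$ normalised on the outer boundary. This is done by starting from the circularly slit model (where equality holds by the lower-bound computation), growing a Jordan arc from the slit to reach a general $E$, and tracking $y_t=\Phi_t(y_0)$ via the Loewner--Komatu equation on the annulus. The key monotonicity $\partial_t\log y_t>0$ is obtained by a three-slit version of the equation together with a symmetry choice that forces the driving functions into the range where the Villat-kernel term has the right sign. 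None of this structure appears in your sketch, and the degeneration-to-$\mathbb{D}^*$ idea does not substitute for it.
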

\paragraph{Remark:}\begin{enumerate}
\item The case of the punctured disk $A_0=\mathbb{D} \setminus \{0\}$ follows by letting $r \to 0$ so that $S_{A_0} (z) = |z|$. This is the $n=1$ case of the result of Deng, Guan and Zhang for the punctured ball in $\mathbb{C}^n$ referred to above. 
\item Since any doubly-connected domain (other than the punctured plane) is conformally equivalent to $A_{r}$ for some $0\leq r<1$, this result determines the squeezing function in the doubly-connected case up to biholomorphisms.
\end{enumerate}

Let us define
\[ \widetilde{\mathcal{F}}_{r}(z) = \left\lbrace f\in \mathcal{F}_{A_r}(z) : f(A_r) \subset \mathbb{D}, f(\partial \mathbb{D})=\partial \mathbb{D} \right\rbrace, \]
 \[ \widetilde{S}_{r} (z) =  
\sup\limits_{f \in \widetilde{\mathcal{F}}_{r}(z)}
\left\lbrace a 
\: : \:
\mathbb{D}_{a} \subset  f(A_r ) \subset \mathbb{D}
\right\rbrace.
\]
By restricting $\mathcal{F}_{A_{r}}(z)$ to $\widetilde{\mathcal{F}}_{A_{r}}(z)$, we will see in Section \ref{sect:proof} that Theorem \ref{MainResult1} will follow if we show that
\[\widetilde{S}_{r}(z)=|z|.\]
To do this, we will identify a candidate for the extremal function in $\widetilde{\mathcal{F}}_{r}(z)$. This will be the conformal map from $A_r$ onto a circularly slit disk, that is a domain of the form $\mathbb{D} \setminus L$ where $L$ is a proper subarc of the circle with radius $R\in(0,1)$ and center 0. Through the results of Crowdy in \cite{crowdy2005schwarz} and \cite{crowdy2011schottky}, this conformal map can be expressed explicitly in terms of the Schottky-Klein prime function (see Theorem \ref{CrowdyConformal}). It will be shown that, in this case, the radius of the slit is $|z|$. Then the following theorem will show that  this conformal map is indeed extremal (which has been suggested by Wold in his lecture just mentioned).

\begin{theorem}
\label{MainResult3}
Let $\widetilde{E} \subset \mathbb{D}$ be a closed set with $0 \notin \widetilde{E}$ and there exists some constant $y>0$ such that $|z| \geq y$ for any $z \in \widetilde{E}$. Furthermore assume that $\Omega=\mathbb{D} \setminus \widetilde{E}$ is doubly connected. If $g$ is a conformal map of $A_{r}$ onto $\Omega$, for some $r\in(0,1)$, such that $g$ maps the $\partial\mathbb{D}$ onto $\partial\mathbb{D}$, then we have
\[ | g^{-1}(0) | \geq y. \]
\end{theorem}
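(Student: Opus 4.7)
The plan is to deform $\widetilde{E}$ to a single point via a monotone Komatu--Loewner chain, using Crowdy's formula (Theorem \ref{CrowdyConformal}) together with a Carath\'eodory kernel limit to identify the initial value of $|g_t^{-1}(0)|$, and then track this quantity along the chain. By approximating $\widetilde{E}$ from outside by smoothly bounded continua, we may assume $\widetilde{E}$ is bounded by a smooth Jordan curve. Let $y_0 = \min\{|z| : z \in \widetilde{E}\} \geq y$ and fix $p \in \widetilde{E}$ with $|p| = y_0$; it suffices to prove $|g^{-1}(0)| \geq y_0$.

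I would construct a strictly increasing family $(\widetilde{E}_t)_{0 < t \leq T}$ of continua in $\mathbb{D}$ with $\widetilde{E}_T = \widetilde{E}$ and $\widetilde{E}_t \downarrow \{p\}$ as $t \to 0^+$, via the standard slit-growth parametrization. For each $t$, $\Omega_t := \mathbb{D} \setminus \widetilde{E}_t$ is doubly connected; let $g_t : A_{r_t} \to \Omega_t$ be the conformal map with $g_t(\partial \mathbb{D}) = \partial \mathbb{D}$, and set $z_t = g_t^{-1}(0)$. As $t \to 0^+$, $\Omega_t$ converges in the Carath\'eodory kernel sense to $\mathbb{D} \setminus \{p\}$, forcing $r_t \to 0$; after a suitable rotational normalization of $A_{r_t}$, $g_t$ converges locally uniformly to the M\"obius self-map of $\mathbb{D}$ sending $0$ to $p$, whence $|z_t| \to |p| = y_0$.

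The heart of the proof is the claim that $t \mapsto |z_t|$ is non-decreasing. Komatu's version of the Loewner equation on annuli, adapted to the present normalization, gives a differential equation
\[
\frac{d \log z_t}{dt} = \mathcal{K}_{r_t}(z_t, \lambda_t),
\]
where $\lambda_t \in C_{r_t}$ is the driving function on the inner boundary and $\mathcal{K}_{r_t}$ is an explicit vector field on $A_{r_t}$ expressible through the logarithmic derivative of the Schottky--Klein prime function. The required monotonicity reduces to the positivity statement $\mathrm{Re}\,\mathcal{K}_{r_t}(z, \lambda) \geq 0$ for $z \in A_{r_t}$ and $\lambda \in C_{r_t}$. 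Granting this, $|g^{-1}(0)| = |z_T| \geq \lim_{t \to 0^+} |z_t| = y_0 \geq y$, as required.

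The main obstacle is the sign inequality for $\mathrm{Re}\,\mathcal{K}_{r_t}$: in the simply-connected Loewner setting the analogous positivity is immediate from the Poisson kernel, but the annular Loewner kernel is a theta-type object whose positivity must be extracted from the Schottky--Klein identities provided by Crowdy's framework. Moreover $r_t$ evolves with $t$, so $\mathcal{K}_{r_t}$ depends on $t$ in two separate ways, which complicates the computation. A secondary technical issue is the continuity of $t \mapsto z_t$ (in particular at $t \to 0^+$) and the validity of the slit-growth approximation, both handled by standard Carath\'eodory kernel arguments.
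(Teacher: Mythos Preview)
Your overall strategy---set up a Loewner chain and track $|g_t^{-1}(0)|$---matches the paper's, but the heart of your argument rests on a claim that is simply false. You assert that the monotonicity of $t\mapsto |z_t|$ reduces to
\[
\mathrm{Re}\,\mathcal{K}_{r_t}(z,\lambda)\geq 0 \qquad \text{for all } z\in A_{r_t},\ \lambda\in C_{r_t},
\]
and you flag this as ``the main obstacle.'' It is not an obstacle; it fails. In the paper's notation the relevant quantity is $P(r,y,\theta)$ (the real part of the vector field in equation~(\ref{LDE})), and Lemma~\ref{lem:P} shows that $P(r,y,\cdot)$ is strictly increasing on $[0,\pi]$ and strictly decreasing on $[\pi,2\pi]$; in particular it changes sign. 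The paper says this directly in Section~\ref{subsec:idea_of the proof}: if the driving function $\beta(t)$ lies in $\Gamma_t^{+}$ then $y_t$ increases, but if $\beta(t)\in\Gamma_t^{-}$ then $y_t$ \emph{decreases}. Your positivity statement would force $|z_t|$ to increase for \emph{every} curve grown from the inner boundary, regardless of the hypothesis $|w|\geq y$ on $\widetilde{E}$, which is absurd (grow a slit inward toward the origin from a circular arc and apply Lemma~\ref{Lemma1} at both ends).

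The subtle point you have missed is that the geometric constraint $|\widetilde{\gamma}(t)|>y_0$ in the image does \emph{not} translate into $\beta(t)\in\Gamma_t^{+}$ in the model annulus; the paper emphasizes exactly this (``However, in general, this may not be the case''). To repair it, the paper does not try to control the sign of $P$ directly. Instead it introduces two auxiliary slits $\gamma_{+},\gamma_{-}$ that extend the circular arc $L$ and runs a \emph{three}-slit Loewner equation: grow $\gamma$ while shrinking the extended arc from both ends at rates chosen (Lemma~\ref{lemma:Fixing}) to enforce the symmetry $2\pi-\xi_{+}=\xi_{-}$. The equation for $\partial_s\log y_{\tau(s)}$ then becomes a \emph{difference} of $P$-values, and the hypothesis $|\widetilde{\gamma}|>y_0$ forces $|\pi-\beta|<|\pi-\xi_{\pm}|$, so Lemma~\ref{lem:theta} gives positivity of that difference. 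This balancing trick is the real content of the proof; without it the single-slit approach you outline cannot close.

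A secondary issue: your chain degenerates to $r_t\to 0$ (punctured disk) at $t=0^{+}$, which is exactly the boundary case excluded from the Komatu theory and makes both the differential equation and the kernel-convergence step delicate. The paper avoids this by starting from a circular arc of positive length (so $r_0>0$) and handling the point case at the very end by a separate limiting argument.
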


To prove this result, we will start with the case where $\widetilde{E}$ is a circular arc so that $\Omega=\mathbb{D} \setminus \widetilde{E}$ is a circular slit disk. We will then grow a curve from the circular slit so that $\widetilde{E}$ is now the union of the curve with the circular slit; the conformal maps onto these domains $\Omega$ will then satisfy a version of the Loewner differential equation due to Komatu (see \cite{Komatu_proof,Komatu_origin}). Studying this differential equation will enable us to prove Theorem \ref{MainResult3}. The remaining cases for $E$ then follow by letting the length of the circular slit tend to $0$.

\ \\
Finally, we obtain the following lower bound for the squeezing function on product domains in $\mathbb{C}^{n}$.
\begin{theorem}
\label{thm:several}
Suppose that $\Omega \subset \mathbb{C}^n$ and $\Omega=\Omega_1 \times \cdots \Omega_n$ where $\Omega_i$ is a bounded domain in $\mathbb{C}$ for each $i$. Then for $z=(z_1, \cdots , z_n) \in \Omega$, we have 
\[ 
S_{\Omega} (z) \geq \left( S_{\Omega_1}(z_1)^{-2} + \cdots + S_{\Omega_n}(z_n)^{-2} \right)^{-1/2}.
\]
\end{theorem}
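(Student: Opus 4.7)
The plan is to build, for each $\epsilon>0$, a near-extremal embedding of $\Omega$ into $\mathbb{C}^{n}$ by taking the Cartesian product of near-extremal embeddings of the factors $\Omega_{i}$, after rescaling each one so that the inscribed disks in $f_{i}(\Omega_{i})$ share a common radius. That rescaling is the key conceptual point: it is what promotes coordinate-wise estimates to the $\ell^{2}$ quantity on the right-hand side of the theorem.

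For each $i$, I would choose $f_{i}\in\mathcal{F}_{\Omega_{i}}(z_{i})$ together with radii $0<a_{i}<b_{i}$ such that $\mathbb{D}_{a_{i}}\subset f_{i}(\Omega_{i})\subset\mathbb{D}_{b_{i}}$ and $a_{i}/b_{i}>S_{\Omega_{i}}(z_{i})-\epsilon$, then replace $f_{i}$ by $g_{i}:=a_{i}^{-1}f_{i}$, invoking the scaling invariance in the second remark after the definition of the squeezing function. This gives $\mathbb{D}_{1}\subset g_{i}(\Omega_{i})\subset\mathbb{D}_{1/s_{i}}$ with $s_{i}:=a_{i}/b_{i}$. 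The product map $G:=(g_{1},\dots,g_{n})\colon\Omega\to\mathbb{C}^{n}$ is a holomorphic embedding with $G(z)=0$, hence lies in $\mathcal{F}_{\Omega}(z)$.

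The only analytical content is the pair of elementary inclusions
\[B(0;1)\subset\mathbb{D}_{1}\times\cdots\times\mathbb{D}_{1},\qquad \mathbb{D}_{1/s_{1}}\times\cdots\times\mathbb{D}_{1/s_{n}}\subset B\!\left(0;\bigl(\textstyle\sum_{i}s_{i}^{-2}\bigr)^{1/2}\right),\]
both following from $\max_{i}|w_{i}|\le\|w\|\le\bigl(\sum_{i}|w_{i}|^{2}\bigr)^{1/2}$. Applied to $G(\Omega)$ they give $B(0;1)\subset G(\Omega)\subset B\!\left(0;(\sum_{i}s_{i}^{-2})^{1/2}\right)$, whence $S_{\Omega}(z)\ge\bigl(\sum_{i}s_{i}^{-2}\bigr)^{-1/2}$. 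Sending $\epsilon\to 0$ forces each $s_{i}$ up to $S_{\Omega_{i}}(z_{i})$, delivering the theorem. The one subtlety to flag is that, without the rescaling step, the product of the unnormalised $f_{i}$ yields only the weaker bound $\min_{i}a_{i}/(\sum_{i}b_{i}^{2})^{1/2}$; equalising the inner radii is precisely what turns the trivial ball-in-polydisk inclusion into the sharp $\ell^{2}$ expression, and beyond this I do not foresee any serious obstacle.
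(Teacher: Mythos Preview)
Your proof is correct and follows essentially the same approach as the paper: rescale each factor map so that the inscribed disk has radius $1$, take the product embedding, and use the two elementary ball--polydisk inclusions. The only cosmetic difference is that the paper invokes normal families to pick an exact extremal $f_{i}$ in each factor, whereas you use an $\epsilon$-approximation and pass to the limit; the argument is otherwise identical.
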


\paragraph{Remark:}
The argument we use to prove the above theorem can be modified to obtain a similar result when $\Omega_{i}$ are not necessarily planar. In \cite{squ_def1}, Deng, Guan and Zhang show that this inequality is attained in the case when each $\Omega_i$ is a  classical symmetric domain.

\hfill

Theorem \ref{thm:several} allows us to use the formula given in Theorem \ref{MainResult1} for the squeezing function of a doubly-connected domain to get a lower bound on the squeezing function of the product of several doubly-connected and simply-connected domains. For example, considering $\Omega = A_r \times \mathbb{D}$, Theorem \ref{thm:several} together with Theorem \ref{MainResult1} yields
\[
S_{A_r \times \mathbb{D}}(z) \geq 
\begin{cases}
\dfrac{r}{\sqrt{r^2+|z_1|^2}}
& \mbox{if } r < |z_1| \leq  \sqrt{r} \\
\dfrac{|z_1|}{\sqrt{1+|z_1|^2}}
& \mbox{if }  \sqrt{r} \leq |z_1| < 1.
\end{cases}
\]
Obtaining the exact form for $S_{A_r \times \mathbb{D}}(z)$ would be of interest.

The rest of the paper is organised as follows.
Firstly, in Section \ref{sect:prelim}, we review some results and concepts that are necessary for this paper including the formula for the conformal map of an annulus $A_r$ to a circularly slit disk $\mathbb{D} \setminus L$ in terms of the Schottky-Klein prime function and a version of the Loewner differential equation that we will need. 
Then, in Section \ref{sec:main_proof1}, we give a proof for Theorem \ref{MainResult3}; the proof of Theorem \ref{MainResult1} is provided in Section \ref{sect:proof} and we prove Theorem \ref{thm:several} in Section \ref{sect:several}. Finally, we discuss the multiply-connected cases in Section \ref{sect:conjecture}.


\section{Preliminary Results}
\label{sect:prelim}

\subsection{Basic Definitions and Notations}
\label{sect:prelim:subsect:notation}
Throughout this paper, we will make use of the following definitions and notations:
\begin{itemize}
    \item For $z \in \mathbb{C}^n$ and $r>0$, $B(z;r)$ denotes the open ball centered at $z$ with radius $r$. When $n=1$, we also set $\mathbb{D}_r=B(0;r)$ and in particular, $\mathbb{D}=B(0;1)$. Then $C_r = \partial \mathbb{D}_r$.
    \item Let $p>0$ and $r=e^{-p}$ so that $0<r<1$. Then $A_r$ denotes the annulus centered at $0$ with inner radius $r$ and outer radius $1$. In this case, $A_r$ is said be of \textit{modulus} $p$.
    \item Let $\Omega$ be a doubly-connected domain in $\mathbb{C}$. The \textit{modulus} $p$ of $\Omega$ is defined to be the unique positive real number $p$ such that there exists a biholomorphism $\phi$ from $\Omega$ to $A_r$ where $r=e^{-p}$. 
    \item By a \textit{(doubly-connected) circularly slit disk}, we refer to a domain of the form $\mathbb{D} \setminus L$ where $L$ is a proper closed subarc of the circle with radius $R \in (0,1)$.
    \item For any set $E\subset\mathbb{C}$, $\partial E$ denotes the topological boundary of $E$ in $\mathbb{C}$. 
    \item Let $\gamma : I \to \mathbb{C}$ be a curve where $I$ is an interval in $\mathbb{R}$. We will write $\gamma I$ instead of $\gamma (I)$ for notational simplicity. 
    \item We assume that the argument function $\mathrm{Arg}$  takes values in $[0,2\pi)$. 
\end{itemize}

\subsection{The Schottky-Klein Prime Function}
\label{sect:prelim:subsect:SKPF}
The Schottky-Klein prime function $\omega(z,y) $ on the annulus $A_{r}$ is defined by 

\begin{equation}
\label{def:Schottky prime}
\omega(z,y) = (z-y) \prod_{n=1}^{\infty} \dfrac{(z-r^{2n}y)(y-r^{2n} z)}{(z-r^{2n} z)(y-r^{2n} y)} 
\qquad
\mbox{for $z,y \in \mathbb{C} \setminus \{ 0\} $. }
\end{equation}
Moreover, $\omega(z,y)$ satisfies the following symmetry properties (see \cite{baker1897abel,crowdy2011schottky} or \cite{hejhal1972theta}):
\begin{equation}
\label{prop1:Schottky prime}
\overline{ \omega ( \overline{z}^{-1} , \overline{y}^{-1} )}
=   \dfrac{- \omega (z, y)}{z y}
\end{equation}
and 
\begin{equation}
\label{prop2:Schottky prime}
\omega ( r^{-2} z , y )
=  \dfrac{ rz \: \omega (z , y ) }{y} .
\end{equation}
Recall from the previous section that a circularly slit disk is a domain of the form $\mathbb{D} \setminus L$ where $L$ is a proper subarc of the circle with radius $R\in(0,1)$ and center 0.
In \cite{crowdy2011schottky}, Crowdy established the following result.

\begin{theorem}
\label{CrowdyConformal}
Let $y$ be a point in $A_r$ and define 
\begin{equation}
\label{ConformalMap}
f(z,y) = \dfrac{\omega(z,y)}{|y| \omega(z,\overline{y}^{-1})}
\qquad
\mbox{for $z,y \in A_r $.}
\end{equation} 
Then $f( \cdot,y)$ is a conformal map from $A_r$ onto a circularly slit disk with $f(\partial\mathbb{D})=\partial\mathbb{D}$ and $y$ is mapped to $0$.
\end{theorem}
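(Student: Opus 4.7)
The plan is to verify directly, from the definition (\ref{ConformalMap}) and the symmetry properties (\ref{prop1:Schottky prime}) and (\ref{prop2:Schottky prime}), that $f(\cdot,y)$ has all the features characterising the conformal map of $A_r$ onto a circularly slit disk sending $y$ to $0$ and preserving $\partial\mathbb{D}$. I would establish, in order: (a) $f(\cdot,y)$ is holomorphic on $A_r$ with a unique simple zero at $z=y$ and no poles; (b) $|f|\equiv 1$ on $C_1$; (c) $|f|$ is constant on $C_r$; (d) $f(\cdot,y)$ is univalent on $A_r$.

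Property (a) is read off from (\ref{def:Schottky prime}): $\omega(\cdot,y)$ is holomorphic on $\mathbb{C}\setminus\{0\}$ with zeros exactly at $r^{2n}y$, $n\in\mathbb{Z}$, and for $y\in A_r$ only $z=y$ lies in $A_r$; similarly the zeros $r^{2n}/\overline{y}$ of $\omega(\cdot,\overline{y}^{-1})$ all lie outside $A_r$. For (b), $|z|=1$ gives $\overline{z}^{-1}=z$, so (\ref{prop1:Schottky prime}) specialises to $\overline{\omega(z,\overline{y}^{-1})}=-\omega(z,y)/(zy)$ on $C_1$; substituting this into (\ref{ConformalMap}) gives
\[
f(z,y) \;=\; -\frac{zy}{|y|}\cdot\frac{\overline{\omega(z,\overline{y}^{-1})}}{\omega(z,\overline{y}^{-1})},
\]
which plainly has modulus $1$ there.

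For (c), I would use $\overline{z}^{-1}=r^{-2}z$ on $C_r$, so that (\ref{prop1:Schottky prime}) reads $\overline{\omega(r^{-2}z,\overline{y}^{-1})}=-\omega(z,y)/(zy)$, and (\ref{prop2:Schottky prime}) applied with $y\mapsto\overline{y}^{-1}$ rewrites $\omega(r^{-2}z,\overline{y}^{-1})$ as a scalar multiple of $\omega(z,\overline{y}^{-1})$. Combining yields an explicit relation between $\overline{\omega(z,y)}$ and $\omega(z,\overline{y}^{-1})$ on $C_r$, from which a short computation shows $|f(z,y)|\equiv R$ for some constant $R\in(0,1)$ depending only on $y$. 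The maximum principle now forces $f(A_r)\subset\mathbb{D}$; moreover the argument principle on the annulus gives that $\arg f$ increases by $2\pi$ along $C_1$ (one interior zero) and by $0$ along $C_r$, so $f|_{C_1}$ is a homeomorphism onto $C_1$, while $f|_{C_r}$ traces out a proper arc of $C_R$.

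The hard part will be (d). I would invoke the existence, given by uniformisation of doubly-connected domains, of a conformal map $F:A_r\to\mathbb{D}\setminus L'$ with $F(y)=0$ and $F(C_1)=C_1$, so that $|F|\equiv R'$ on $C_r$ for some $R'\in(0,1)$. Setting $g=f/F$ produces a holomorphic, nonvanishing function on $A_r$ whose modulus is constant ($=1$) on $C_1$ and constant ($=R/R'$) on $C_r$. Since $\log|g|$ is then harmonic with locally constant boundary values on the two components of $\partial A_r$, it must be an affine function of $\log|z|$; the integer-period constraint for $g$ itself to be single-valued on the annulus, combined with the absence of zeros and poles of $g$, then forces $\log|g|\equiv 0$, so that $g$ is a unimodular constant $c$. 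Thus $f=cF$ is univalent, and $f(A_r)=c(\mathbb{D}\setminus L')$ is a circularly slit disk with slit radius $R=R'$. The normalisations $f(y,y)=0$ and $f(\partial\mathbb{D})=\partial\mathbb{D}$ are immediate from the construction, completing the proof.
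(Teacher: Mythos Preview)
The paper does not give its own proof of Theorem~\ref{CrowdyConformal}; the result is quoted from Crowdy~\cite{crowdy2011schottky}. Your outline is therefore a self-contained argument rather than something to compare against the paper, and steps (a)--(c) are essentially correct (the paper in fact carries out your computation (c) immediately after stating the theorem, obtaining $R=|y|$).

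There is, however, a genuine gap in your handling of the winding numbers and hence in step~(d). The argument principle applied to $f$ on $A_r$ gives only $d_1-d_r=1$, where $d_1,d_r$ are the winding numbers of $f$ along $C_1$ and $C_r$; it does \emph{not} by itself give $d_1=1$ and $d_r=0$ separately, as you assert at the end of~(c). This matters because your key claim in~(d)---that ``the integer-period constraint for $g=f/F$ to be single-valued, combined with the absence of zeros and poles, forces $\log|g|\equiv 0$''---is incorrect as stated: those hypotheses yield only $g(z)=cz^k$ with $|c|=1$ and $k\in\mathbb{Z}$, and every such function is single-valued, holomorphic and nonvanishing on $A_r$. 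Nothing you have written excludes $k\neq 0$.

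To close the gap you must pin down $d_1$ (or $d_r$) separately. Since $F$ is a conformal map with $F(C_1)=C_1$, it has winding $1$ on $C_1$; so $k=0$ follows once you show $d_1=1$ for $f$. One clean route is directly from the product~(\ref{def:Schottky prime}): on $C_1$, the factor $(z-y)$ has winding $1$ about $0$ (as $|y|<1$), while each infinite-product factor
\[
\frac{(z-r^{2n}y)(y-r^{2n}z)}{z(1-r^{2n})\,y(1-r^{2n})}
\]
contributes net winding $1+0-1-0=0$; hence $\omega(\cdot,y)$ has winding $1$ on $C_1$. The same count shows $\omega(\cdot,\overline{y}^{-1})$ has winding $0$ on $C_1$ (since $|\overline{y}^{-1}|>1$ kills the front factor). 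Therefore $d_1=1-0=1$, and your comparison with $F$ then gives $k=0$, $f=cF$, and univalence as claimed.
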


See also \cite{book_crowdy2020}.
Theorem \ref{CrowdyConformal} allows us to compute the radius of the circular arc in $f(A_r)$.
For any $z=re^{i \theta}$, we have
\begin{align*}
| f(z,y) |^2
&=
f(z,y) \overline { f(z,y)} \\
&=
\left( 
\dfrac{\omega(z,y)}{|y| \omega(z,\overline{y}^{-1})}
\right) 
\left( 
\dfrac{ \overline {\omega (z,y)} }{|y| \overline {\omega(z,\overline{y}^{-1})} }
\right) \\
&=
\dfrac{1}{|y|^2}
\left( 
\dfrac{\omega(z,y)}{ \omega(z,\overline{y}^{-1})}
\right) 
\left( 
\dfrac{ \overline {\omega ( r^2 \overline{z}^{-1}, y )} }{ \overline {\omega ( r^2 \overline{z}^{-1}, \overline{y}^{-1}) }}
\right).
\end{align*}
Using (\ref{prop1:Schottky prime}) and (\ref{prop2:Schottky prime}),
\begin{align*}
| f(z,y) |^2&=
\dfrac{1}{|y|^2}
\left( 
\dfrac{\omega(z,y)}{ \omega(z,\overline{y}^{-1})}
\right) 
\left( 
\dfrac{r^{-2}z y  \: \omega ( r^{-2} z, \overline{y}^{-1} ) }{ r^{-2}z \overline{y}^{-1} \: \omega ( r^{-2} z, y) }
\right) \\
&=
\left( 
\dfrac{\omega(z,y)}{ \omega(z,\overline{y}^{-1})}
\right) 
\left( 
\dfrac{ rz y \: \omega ( z, \overline{y}^{-1} ) }{ rz \overline{y}^{-1} \: \omega ( r^{-2} z, y) }
\right) \\&=
|y|^2.
\end{align*}
This shows that the radius of the circular arc in $f(A_r)$ is $|y|$ and, in particular, it does not depend on $r$. Note that if $\phi$ is a conformal map from a circularly slit disk $\Omega_1$ to another circularly slit disk $\Omega_2$ such that $\phi$ maps $\partial\mathbb{D}$ to $\partial\mathbb{D}$ and $\phi(0)=0$, then $\phi$ must be a rotation (Lemma 6.3 in \cite{conway2012functions}).  We restate this result as the following lemma.

\begin{lemma}
\label{Lemma1}
For any annulus $A_r$ and for any circularly slit disk $\Omega$ whose arc has radius $y$, if $f$ is a conformal map which maps $A_r$ onto $\Omega$ with $f(\partial \mathbb{D}) = \partial \mathbb{D}$, then we have 
\[ |f^{-1} (0)| =y. \]
\end{lemma}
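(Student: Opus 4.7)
The plan is to reduce to the rigidity statement for circularly slit disks that the authors quote just before stating the lemma, using Crowdy's explicit conformal map from Theorem \ref{CrowdyConformal} as a reference model. First I would set $w_{0} := f^{-1}(0) \in A_{r}$ and apply Theorem \ref{CrowdyConformal} with base point $w_{0}$ to produce the conformal map $g := f(\,\cdot\,,w_{0}) : A_{r} \to \Omega'$, where $\Omega'$ is a circularly slit disk. By construction $g(\partial\mathbb{D}) = \partial\mathbb{D}$ and $g(w_{0}) = 0$, and by the explicit computation carried out in the paragraphs preceding the lemma, the slit of $\Omega'$ has radius exactly $|w_{0}|$.

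Next I would consider the composition $\phi := f \circ g^{-1} : \Omega' \to \Omega$. This is a conformal bijection between two doubly-connected circularly slit disks; moreover $\phi(0) = f(g^{-1}(0)) = f(w_{0}) = 0$ and $\phi$ sends $\partial\mathbb{D}$ to $\partial\mathbb{D}$ because both $f$ and $g$ do. Thus $\phi$ satisfies the hypotheses of the rigidity result quoted from Lemma~6.3 of \cite{conway2012functions}, which forces $\phi$ to be a rotation $z \mapsto e^{i\alpha} z$ for some $\alpha \in \mathbb{R}$.

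Finally, since a rotation preserves the modulus of every point, it maps the slit of $\Omega'$ (a subarc of the circle of radius $|w_{0}|$) onto a subarc of the circle of the same radius. But the image is the slit of $\Omega$, whose radius is $y$ by hypothesis, so $|w_{0}| = y$; that is, $|f^{-1}(0)| = y$.

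I do not foresee any real obstacle: Theorem \ref{CrowdyConformal} together with the preceding radius computation already produces a ``canonical'' conformal map onto a circularly slit disk whose slit radius is read off from the preimage of $0$, and the cited rigidity lemma upgrades this from one distinguished map to all such maps. The only step to be careful with is verifying that $\phi$ really takes $\partial\mathbb{D}$ to $\partial\mathbb{D}$ (so that the outer boundary component of $\Omega'$ is matched to the outer boundary component of $\Omega$, rather than accidentally to the slit), but this is immediate from the hypotheses on $f$ and from the construction of $g$.
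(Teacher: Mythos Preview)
Your proposal is correct and is essentially identical to the paper's own argument: the paper establishes the radius computation for Crowdy's explicit map $f(\,\cdot\,,y)$ and then invokes the same rigidity statement (Lemma~6.3 of \cite{conway2012functions}) to deduce that any other conformal map onto a circularly slit disk differs from Crowdy's by a rotation, which fixes the slit radius. Your write-up simply makes explicit the composition $\phi = f\circ g^{-1}$ that the paper leaves implicit when it says ``We restate this result as the following lemma.''
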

\paragraph{Remark:} We thank the referee for informing us that Lemma \ref{Lemma1} is the same as Lemma $3$ of \cite{reich1960canonical}. Our proof of the above lemma is different from that of Lemma $3$ in \cite{reich1960canonical}.

When $y$ is positive, we have the following lemma. 
\begin{lemma}
\label{lemma:sym}
Let $\Omega=\mathbb{D} \setminus L$ be a circularly slit disk such that $L$ is symmetric across the real axis, that is, $\overline{z} \in L$ if and only if $z \in L$. Let $y$ be a point on an annulus $A_r$ such that $r<y<1$. Let $g: A_r \to \Omega$ be the conformal map from $A_r$ onto $\Omega$ such that $g(y)=0$ and $g(\partial \mathbb{D}) =\partial \mathbb{D}$. Then for any $z \in A_r$, 
\[g(z)=\overline{g(\overline{z})}.\]
In particular, if $p_1$ and $p_2$ denote the two end points of $L$ and $\pi_1,\pi_2 \in C_r$ denote the two points on the inner boundary of $A_r$ such that $g(\pi_1)=p_1$ and $g(\pi_2)=p_2$; then we have $\overline{\pi_1} = \pi_2$. 
\end{lemma}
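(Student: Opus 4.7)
The plan is to combine a Schwarz-reflection construction with the uniqueness of the conformal map $g$ determined by the given normalisations.

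First, I would define $\tilde g : A_r \to \mathbb{C}$ by $\tilde g(z) = \overline{g(\overline z)}$. Since $A_r$ is invariant under complex conjugation, $\tilde g$ is holomorphic and injective on $A_r$. By hypothesis, $L$ (and hence $\Omega = \mathbb{D}\setminus L$) is symmetric across the real axis, so $\tilde g(A_r) = \overline{g(A_r)} = \overline{\Omega} = \Omega$, making $\tilde g : A_r \to \Omega$ a conformal bijection. Since $\partial\mathbb{D}$ is invariant under conjugation and $y$ is real, $\tilde g(\partial\mathbb{D}) = \partial\mathbb{D}$ and $\tilde g(y) = \overline{g(y)} = 0$; thus $\tilde g$ satisfies exactly the same normalisations as $g$.

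Next, I would show $\tilde g = g$ by examining the automorphism $h := g^{-1} \circ \tilde g$ of $A_r$. Because both $g$ and $\tilde g$ send $\partial\mathbb{D}$ to $\partial\mathbb{D}$, $h$ preserves the outer boundary component and hence does not belong to the family of self-maps of $A_r$ that interchange the two boundary circles; the classification of conformal self-maps of an annulus then forces $h(z) = e^{i\theta} z$ for some $\theta \in \mathbb{R}$. Evaluating at $z=y$ gives $e^{i\theta} y = h(y) = g^{-1}(\tilde g(y)) = g^{-1}(0) = y$, so $\theta = 0$ since $y \neq 0$. Hence $\tilde g = g$, which is the reflection identity $g(z) = \overline{g(\overline z)}$.

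Finally, for the statement about $\pi_1$ and $\pi_2$: I would extend $g$ continuously to $\overline{A_r}$, which is possible because $\partial\mathbb{D}$ and the Jordan arc $L$ are locally connected. Applying the reflection identity at $\pi_1$ yields $g(\overline{\pi_1}) = \overline{g(\pi_1)} = \overline{p_1}$. Symmetry of $L$ across the real axis forces $\overline{p_1}$ to be an endpoint of $L$, so $\overline{p_1} = p_2$ (in the generic case where the endpoints are not both real), and the fact that $p_2$ has a unique preimage on $C_r$ under the extended $g$ gives $\overline{\pi_1} = \pi_2$. The main obstacle is really only the uniqueness step: one must invoke the classification of conformal automorphisms of an annulus and verify that the condition $g(\partial\mathbb{D}) = \partial\mathbb{D}$ rules out the boundary-swapping family $z \mapsto re^{i\theta}/z$; the remaining steps are routine.
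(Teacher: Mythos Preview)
Your argument is correct and matches the paper's proof almost exactly: define the reflected map, note it satisfies the same normalisations, and conclude via the classification of annulus automorphisms that it equals $g$. The only minor difference is in the endpoint claim, where the paper works with interior sequences $p_{i,n}\to p_i$ (so that the reflection identity is applied strictly inside $A_r$) rather than invoking a continuous boundary extension; both routes are valid.
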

\begin{proof}
 Consider the map $G$ defined by $G(z)=\overline{g(\overline{z})}$. Then $G$ is a conformal map of $A_{r}$ onto $\Omega$ with $G(\partial\mathbb{D})=\partial\mathbb{D}$ and $G(y)=0$. Hence, $G^{-1}\circ g$ is a conformal automorphism of $A_{r}$ that fixes $y$ and does not interchange the boundary components. Since any such conformal automorphism of an annulus is the identity mapping, this implies that $G^{-1} \circ g$ is the identity and so $g(z)=\overline{g(\overline{z})}$ on $A_r$. Let $\{ p_{i,n}\} \subset \Omega$ be sequences of points such that $p_{2,n}=\overline{p_{1,n}} $ and  $\lim\limits_{n \to \infty} p_{i,n} = p_i$. Define $\pi_{i,n}=g^{-1} (p_{i,n}) \in A_r$ for $i=1,2$. It follows that for each $n\in \mathbb{N}$, 
\[ g(\pi_{2,n}) = p_{2,n} = \overline{p_{1,n}} = \overline{ g(\pi_{1,n}) }= g (\overline{\pi_{1,n}} ) .\]
Since $g$ is conformal, we have $\overline{\pi_{1,n}}=\pi_{2,n}$ for each $n\in \mathbb{N}$. Note that $\lim\limits_{n \to \infty} \pi_{i,n} = \pi_i$. Hence we conclude that $\overline{\pi_1} =\pi_2$.
\end{proof}

\subsection{Approximating by Slit Domains}
\label{sect:prelim:subsect:Caratheodory}

Let $\mathcal{D}$ be the collection of doubly-connected domains $\Omega$ so that $\Omega \subset A_r$ for some $r>0$ and $\partial \mathbb{D}$ be one of the boundary components of $\Omega$. Let $\{ \Omega_n \}$ be a sequence in $\mathcal{D}$. Define the \textit{kernel} $\Omega$ of the sequence $\{ \Omega_n \}$ as follows:
\begin{itemize}
\item If there exists some $\Omega^* \in \mathcal{D}$ such that $\Omega^* \subset \bigcap\limits_{n=1}^{\infty} \Omega_n$, then the kernel $\Omega$ is defined to be the maximal doubly-connected domain in $\mathcal{D}$ such that for any compact subset $K$ of $\Omega$, there exists an $N \in \mathbb{N}$ so that $K \subset \Omega_n$ whenever $n>N$;
\item otherwise, the kernel $\Omega$ is defined to be $\partial \mathbb{D}$.
\end{itemize} 
Then a sequence $\{ \Omega_n \}$ converges to $\Omega$ \textit{in the sense of kernel convergence}, if $\Omega$ is the kernel of every subsequence of $\{ \Omega_n \}$. 
Let $\{ \Omega_n \}$ be a sequence of doubly-connected domains in $\mathcal{D}$. Since every doubly-connected domain of finite modulus is conformally equivalent to an annulus $A_{r}$ for some $r\in(0,1)$, there exists a sequence $\{ \psi_n \}$ of conformal maps such that $\psi_n$ maps $A_{r_n}$ onto $\Omega_n$ and $\psi_n$ is normalized appropriately. A version of the Carath\'{e}odory kernel convergence theorem for doubly-connected domains  will show that the kernel convergence of $\{ \Omega_n \}$ implies local uniform convergence of $\{ \psi_n \}$. This is Theorem 7.1 in \cite{Komatu_proof}. We restate this result in a form which we will need later.

\begin{theorem}
\label{dense2}
Suppose that $r>0$ and $r<y<1$. Let $\{ \Omega_n \}$ be a sequence of doubly connected domains in $\mathcal{D}$ such that $y\in \bigcap_{n=1}^\infty \Omega_n$.
Let $\{ r_n \}$ be a sequence with $r< r_n <1$ for $n \geq 1$ such that there exists a conformal map $\Phi_n$ of $\Omega_n$ onto $A_{r_n}$ satisfying $\Phi_n (y) > 0$ and $\Phi_n (\partial \mathbb{D})= \partial \mathbb{D}$ for every $n$. Then the kernel convergence of $\Omega_n$ to a doubly connected domain $\Omega$ in $\mathcal{D}$ implies that the sequence $\{r_n\}$ converges to $r$ and that the sequence $\{ \Phi_n \}$ converges locally uniformly to a conformal map $\Phi$ of $\Omega$ onto $A_r$ satisfying $\Phi(y)>0$ and $\Phi (\partial \mathbb{D})= \partial \mathbb{D}$. 
\end{theorem}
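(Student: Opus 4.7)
The plan is to adapt the classical Carath\'{e}odory kernel convergence theorem for conformal maps of the disc to the annular setting. I would follow a standard three-step blueprint: (i) use normal families to extract a locally uniformly convergent subsequence from $\{\Phi_{n}\}$, together with a convergent subsequence of $\{r_{n}\}$; (ii) identify the subsequential limit as a conformal map of $\Omega$ onto $A_{r^{*}}$ satisfying the required normalisations; (iii) use conformal invariance of the modulus to deduce $r^{*}=r$ and appeal to uniqueness to upgrade subsequential convergence to full convergence.

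For (i), each $\Phi_{n}$ takes values in $\mathbb{D}$ and is thus bounded by $1$. By the definition of kernel convergence, every compact $K\subset\Omega$ is contained in $\Omega_{n}$ for all sufficiently large $n$, so $\{\Phi_{n}\}$ is eventually defined and uniformly bounded on $K$. Montel's theorem then produces a subsequence $\Phi_{n_{k}}\to\Phi$ locally uniformly on $\Omega$; after passing to a further subsequence I may also assume $r_{n_{k}}\to r^{*}\in[0,1]$.

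For (ii), the central point is to show $\Phi$ is univalent with image exactly $A_{r^{*}}$. Since $\Phi_{n_{k}}(y)\in(r_{n_{k}},1)\cap\mathbb{R}_{+}$, the limit satisfies $\Phi(y)\in[r^{*},1]$; a Koebe-type lower bound on $|\Phi_{n_{k}}'(y)|$ (derived from the fact that $\Phi_{n_{k}}^{-1}$ maps a disc about $\Phi_{n_{k}}(y)$ back into $\mathbb{D}$) prevents $\Phi$ from being constant, so Hurwitz's theorem gives that $\Phi$ is univalent. To control the image, I would exploit the inverse maps $\Phi_{n_{k}}^{-1}\colon A_{r_{n_{k}}}\to\Omega_{n_{k}}$: on any compact subset of $A_{r^{*}}$ these are eventually defined and take values in $\mathbb{D}$, hence form a normal family, and a further subsequential limit $\Psi$ satisfies $\Phi\circ\Psi=\mathrm{id}_{A_{r^{*}}}$, giving $\Phi(\Omega)\supset A_{r^{*}}$. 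The reverse inclusion uses $|\Phi|<1$ together with the fact that $\Phi$ avoids $0$ (again by Hurwitz applied to $\Phi_{n_{k}}-0$ since $0\notin A_{r_{n_{k}}}$). The boundary condition $\Phi(\partial\mathbb{D})=\partial\mathbb{D}$ is passed to the limit via Schwarz reflection: each $\Phi_{n}$ extends across $\partial\mathbb{D}$ to a conformal map of the reflected doubly-connected domain, and the extended sequence is still normal in a neighbourhood of $\partial\mathbb{D}$, so the symmetry survives in the limit.

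For (iii), the fact that $\Phi\colon\Omega\to A_{r^{*}}$ is conformal combined with conformal invariance of the modulus forces $r^{*}=e^{-\mathrm{mod}(\Omega)}=r$, and the normalisations $\Phi(y)>0$, $\Phi(\partial\mathbb{D})=\partial\mathbb{D}$ uniquely determine $\Phi$ (any two such maps would differ by a conformal automorphism of $A_{r}$ preserving $\partial\mathbb{D}$ and fixing a positive real point, which must be the identity). Since every subsequence of $\{(\Phi_{n},r_{n})\}$ has a sub-subsequence converging to this same $(\Phi,r)$, the full sequences converge. The main obstacle is the non-degeneracy $r^{*}\in(0,1)$: ruling out $r^{*}=1$ amounts to preventing the moduli $\mathrm{mod}(\Omega_{n})=-\log r_{n}$ from collapsing to $0$, which follows from the kernel $\Omega$ being a genuine doubly-connected domain containing points at positive distance from $\partial\mathbb{D}$; ruling out $r^{*}=0$ amounts to preventing the inner complementary components of $\Omega_{n}$ from shrinking to a point, which is again forced by the kernel $\Omega$ being doubly-connected. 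Once these bounds are in place, the subsequence chase runs smoothly.
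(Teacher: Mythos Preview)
The paper does not give its own proof of this theorem: it simply states that the result ``can be obtained from Theorem~7.1 in \cite{Komatu_proof} by renormalising the conformal maps.'' So your proposal is not competing with an argument in the paper; you are sketching directly the normal-families proof that underlies Komatu's theorem (and, ultimately, the classical Carath\'{e}odory kernel theorem). The three-step strategy you describe is the standard and correct one, and is presumably what one would find upon unpacking the cited reference.

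Two places in the sketch deserve tightening. First, your argument for the inclusion $\Phi(\Omega)\subset A_{r^{*}}$ invokes only $|\Phi|<1$ and $\Phi\neq 0$, which yields merely $\Phi(\Omega)\subset\mathbb{D}\setminus\{0\}$; you should instead pass the inequality $|\Phi_{n_{k}}|>r_{n_{k}}$ to the limit to get $|\Phi|\geq r^{*}$ and then use the open mapping theorem to make it strict. Second, your Koebe-type lower bound on $|\Phi_{n_{k}}'(y)|$ depends on $\operatorname{dist}(\Phi_{n_{k}}(y),\partial A_{r_{n_{k}}})$ being bounded below, i.e.\ on $\Phi_{n_{k}}(y)$ staying uniformly away from both $r_{n_{k}}$ and $1$; this is true but needs its own argument (for instance, $\log|\Phi_{n_{k}}|/\log r_{n_{k}}$ is the harmonic measure of the inner boundary of $\Omega_{n_{k}}$, and at the fixed interior point $y$ this can be trapped strictly between $0$ and $1$ using a pair of separating curves in $\Omega$ that eventually lie in every $\Omega_{n_{k}}$). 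It is also cleaner to establish $r^{*}\in(0,1)$ \emph{before} step~(ii), since both fixes use it: $r^{*}\geq r>0$ is part of the hypothesis, and $r^{*}<1$ follows from modulus monotonicity once a fixed compact separating annulus in $\Omega$ is contained in $\Omega_{n}$ for all large $n$.
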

Theorem \ref{dense2} can be obtained from Theorem 7.1 in \cite{Komatu_proof} by renormalising the conformal maps. This theorem leads to the following proposition.
\begin{proposition}
\label{dense}
Let $E \subset \overline{A}_r$ be a closed set such that $A_r \setminus E$ is doubly connected and $E \cap C_r \neq \emptyset$. 
Assume there exists some $y \in A_r \setminus E$ with $y>0$ and let $\Phi$ be the conformal map from $A_r \setminus E$ onto some annulus  $A_{r'}$ normalized such that $\Phi (y) >0$ and $\Phi (\partial \mathbb{D})= \partial \mathbb{D}$. 
\begin{enumerate}
\item Suppose that $\partial E \cap A_{r}$ is a Jordan arc. Then we can find a Jordan arc $\gamma : [0,T) \to \overline{A_r}\setminus\{y\}$ satisfying $\gamma (0) \in C_r$ and $\gamma (0,T) \subset A_r$ and an increasing function $q:[0,T) \to [r,r']$ with $q(0)=r$ and $q(T)=r'$ such that the conformal maps $\Phi_t$ of $A_r \setminus \gamma (0,t] $ onto $A_{q(t)}$ with $\Phi_{t} (y)>0$ and $\Phi_t (\partial \mathbb{D})= \partial \mathbb{D}$ satisfy $\Phi_t \to  \Phi$ locally uniformly as $t \to T$. 
\item For the cases where $\partial E \cap A_r$ is not a Jordan arc, we can find an increasing sequence  $\lbrace q_{n} \rbrace$ with $r<q_{n}<1$ for all $n$ and $q_{n}\to r'$ as $n\to \infty$; a sequence of Jordan arcs $\Gamma_{n}\subset \overline{A_{r}} \setminus \{y \}$ which starts from $C_{r}$; conformal maps $\Phi_{n}$ which map $A_r\setminus\Gamma_{n}$ onto  the annulus $A_{q_{n}}$ with $\Phi_{n}(y)>0$  and $\Phi_n (\partial \mathbb{D})= \partial \mathbb{D}$ such that $\Phi_n \to  \Phi$ locally uniformly as $n \to \infty$. 
\end{enumerate}
\end{proposition}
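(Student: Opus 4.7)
The plan is to reduce both parts to the Carath\'eodory kernel convergence theorem for doubly-connected domains (Theorem \ref{dense2}). In both cases we construct a family of approximating doubly-connected subdomains of $A_r$ whose inner obstacle is a Jordan arc attached to $C_r$, and show that this family converges in the kernel sense (with reference point $y$) to $A_{r}\setminus E$. The monotonicity of the moduli will come from the strict monotonicity of the modulus of a doubly-connected domain under strict inclusion (with the outer boundary $\partial\mathbb{D}$ fixed), and both the local uniform convergence of the conformal maps and the convergence of the radii will then be immediate from Theorem \ref{dense2}.

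For Part (1), since $\partial E \cap A_r$ is a Jordan arc and $E \cap C_r \neq \emptyset$, we parametrize this Jordan arc as $\gamma:[0,T)\to\overline{A_r}\setminus\{y\}$ with $\gamma(0)\in C_r$ and $\gamma(0,T)\subset A_r$ (using $y\notin E$ to guarantee $y\notin\gamma[0,T)$). For each $t\in[0,T)$, the set $A_r\setminus\gamma(0,t]$ is doubly connected with outer boundary $\partial\mathbb{D}$ and contains $y$, so by the uniformization of doubly-connected domains there is a unique conformal map $\Phi_t$ onto an annulus $A_{q(t)}$ satisfying $\Phi_t(y)>0$ and $\Phi_t(\partial\mathbb{D})=\partial\mathbb{D}$. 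For $s<t$, the strict inclusion $\gamma(0,s]\subsetneq\gamma(0,t]$ shrinks the domain and strictly decreases its modulus, hence $q(s)<q(t)$; and $q(0)=r$ corresponds to the empty slit. As $t\to T$, the domains $A_r\setminus\gamma(0,t]$ converge in the kernel sense to $A_r\setminus E$ (any compact $K\subset A_r\setminus E$ has positive distance to $E$ and so to $\gamma(0,t]$ for $t$ close to $T$, while every point of the arc is eventually hit), and Theorem \ref{dense2} delivers $\Phi_t\to\Phi$ locally uniformly together with $q(t)\to r'$.

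Part (2) handles a general $E$ by approximating it by an increasing sequence of Jordan arcs $\Gamma_n\subsetneq\Gamma_{n+1}$ in $\overline{A_r}\setminus\{y\}$, each starting at a fixed point $z_0\in E\cap C_r$. The natural construction is to fix a countable dense sequence $\{z_k\}$ in $E$ and, at stage $n$, extend $\Gamma_{n-1}$ by a small Jordan detour ending at $z_n$, chosen close enough to $E$ that the $\Gamma_n$ cluster only on $E$. Then $A_r\setminus\Gamma_n$ is doubly connected for the same reason as in Part (1); the nesting combined with strict modulus monotonicity gives $q_n<q_{n+1}$; and the choice of detours ensures the kernel convergence $A_r\setminus\Gamma_n\to A_r\setminus E$, so Theorem \ref{dense2} again supplies $\Phi_n\to\Phi$ locally uniformly and $q_n\to r'$. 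The main obstacle lies precisely in this inductive construction: when $E$ is a wild closed set (for instance non-locally connected, or with a complicated intersection with $C_r$), one must arrange at each step that $\Gamma_n$ is injective, stays in $\overline{A_r}\setminus\{y\}$, lies within a prescribed small neighborhood of $E$, and keeps $A_r\setminus\Gamma_n$ doubly connected---all simultaneously. These are standard but delicate planar-topology manipulations; everything else is a formal consequence of the inclusion structure and Theorem \ref{dense2}.
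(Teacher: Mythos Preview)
Your Part (1) follows the paper's route almost exactly---parametrize the boundary arc, slit along it, and invoke Theorem~\ref{dense2}. One small case you skip: from $E\cap C_r\neq\emptyset$ and ``$\partial E\cap A_r$ is a Jordan arc'' it does \emph{not} follow that the arc has an endpoint on $C_r$. If $E$ contains a full neighbourhood (in $\overline{A_r}$) of its contact with $C_r$, then $\partial E\cap A_r$ lies entirely in the open annulus. The paper handles this by first running a straight segment inside $E$ from $C_r$ out to $\partial E$ and then continuing along $\partial E\cap A_r$; you should add this connector.

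Your Part (2) is a genuinely different strategy from the paper's, and the difference matters. The paper avoids all planar-topology constructions: it takes the uniformizing map $f:A_s\to A_r\setminus E$, thickens $E$ to $E_\delta=\overline{f(\{s<|z|<s+\delta\})}$, notes that $\partial E_\delta\cap A_r=f(C_{s+\delta})$ is an \emph{analytic} Jordan curve so Part (1) applies to each $E_\delta$, and then runs a diagonal argument as $\delta\to 0$. This is short and requires nothing beyond the conformal equivalence of doubly-connected domains to annuli.

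Your plan---build $\Gamma_n$ by successively appending detours to a dense sequence in $E$---is plausible in spirit, but the sentence ``standard but delicate planar-topology manipulations'' is doing all the work and is not actually standard. You must simultaneously keep $\Gamma_n$ injective, keep $A_r\setminus\Gamma_n$ doubly connected, keep $\Gamma_n$ inside a shrinking neighbourhood of $E$ (so that compacta in $A_r\setminus E$ are eventually avoided), and ensure no subsequence has a strictly larger kernel than $A_r\setminus E$. For wild $E$ (non--locally-connected inner boundary, say) the detour from $z_{n-1}$ to $z_n$ within a thin neighbourhood of $E$ while preserving injectivity and double connectivity is not at all automatic. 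Unless you actually carry out that construction, Part (2) has a gap; the paper's conformal-thickening argument sidesteps it entirely and is the cleaner fix.
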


\begin{proof}
For part 1, suppose that $\partial E \cap C_r \neq \emptyset$, then we can find a Jordan arc $\gamma:[0,T] \to \overline{A_r}$  such that $|\gamma(0)|=r$ and $\gamma(0,T) =\partial E \cap A_r$
Otherwise, $\partial E \cap C_r = \emptyset$. Then $E$ is bounded by $C_r$ and a closed curve in $A_r$. In this case, we define a Jordan arc $\gamma:[0,T] \to \overline{A_r}$  such that $\gamma [0,T_1]$ is the straight line segment in $E$ from a point in $C_r$ to a point in $\partial E$ for some $0<T_1<T$ and $\gamma [T_1,T) =\partial E \cap A_r$. We define $\Omega_t = A_r \setminus \gamma (0,t] $. 
The conformal equivalence of any doubly-connected domains to an annulus implies that there exists an increasing function $q:[0,T] \to [r,r']$ with $q(0)=r$ and $q(T)=r'$ and a family of conformal maps $\Phi_t$ of $\Omega_t$ onto $A_{q(t)}$ with $\Phi_t (y) > 0$ and $\Phi_t (\partial \mathbb{D})= \partial \mathbb{D}$. Then as $t \to T$, $ \Omega_t \to A_r \setminus E$ in the sense of kernel convergence. Hence, by Theorem \ref{dense2}, the sequence $\{ \Phi_t\}$ converges locally uniformly to a conformal map $\Phi$ of $A_r \setminus E$ onto $A_{r'}$ such that $\Phi (y) > 0$ and $\Phi (\partial \mathbb{D})= \partial \mathbb{D}$.  This proves part 1. 

For part 2, since $A_r \setminus E$ is doubly connected, there exists an annulus $A_s$ and a conformal map $f$ of $A_s$ onto $A_r \setminus E$ for some $s>0$ such that $f(\partial \mathbb{D}) = \partial \mathbb{D}$. For any $0< \delta < 1-s$, we let \[E_\delta = \overline{f(\{ z \in A_s :  s< |z| < s+ \delta \}) }.\] Also, when $\delta$ is small enough, we have $y \notin E_\delta$. Since $f$ is conformal, $\partial E_{\delta} \cap A_r = f(C_{s+\delta})$ is an analytic Jordan arc. So part 1 of the proposition applies to $A_r \setminus E_{\delta}$. 
That is, we can find a Jordan arc $\gamma_{\delta} : [0,T) \to \overline{A_r}\setminus\{y\}$ satisfying $\gamma_\delta (0) \in C_r$ and $\gamma_\delta (0,T) \subset A_r$ and an increasing function $q_\delta :[0,T) \to [r,r']$ with $q_{\delta}(0)=r$ and $q_{\delta}(T)=r'_\delta$ such that the conformal maps $\Phi_{t}^{\delta}$ of $A_r \setminus \gamma_{\delta} (0,t] $ onto $A_{q_{\delta}(t)}$ with $\Phi_{t}^{\delta} (y)>0$ and $\Phi_{t}^{\delta} (\partial \mathbb{D})= \partial \mathbb{D}$ satisfy $\Phi_{t}^{\delta} \to  \Phi^\delta$ locally uniformly as $t \to T$. 
Letting $\delta \to 0$, and applying a diagonal argument we get the desired result. 
More precisely, let $\{ t_n \} \subset [0,T)$ be an increasing sequence and the desired result will follow by letting $\Gamma_n =  \gamma_{1/n} (t_n)$ and $\Phi_n = \Phi_{t_n}^{1/n}$. 

\end{proof}

The simply-connected version of this proposition is given in Theorem 3.2 in \cite{duren2001univalent}. This proposition allows us to consider slit domains in the proof of Theorem \ref{MainResult3} via a Loewner-type differential equation which is introduced in the next section. This is analogous to the approach to solving various coefficient problems for univalent functions on $\mathbb{D}$ (including the De Branges-Bieberbach Theorem); see  \cite{duren2001univalent} and references therein.

\subsection{The Loewner-type Differential Equation in Annuli}
\label{sect:prelim:subsect:LDE}
In this section, we introduce the Loewner differential equation which is a differential equation for the conformal maps (normalized and parametrized appropriately) onto a slit domain. We first introduce the classical setting in annuli, where the slit grows from the outer boundary.

For $p>0$ , define $r=e^{-p}$ and $r_t=e^{-p+t}$. Suppose that $\gamma : [0,T] \to \overline{A_r}$ is a Jordan arc satisfying $\gamma (0) \in \partial \mathbb{D}$ and $\gamma (0,T] \subset A_r$, such that $A_r \setminus \gamma (0,t]$ has modulus $p-t$. Then there exists a family of conformal maps $\phi_t : A_r \setminus \gamma (0,t] \to A_{r_t}$, continuously differentiable in $t$, such that 
\[  \alpha (t)  := \phi_t ( \gamma (t) ) \in \partial \mathbb{D} \]
and  $\phi_t (z)$ satisfies the Komatu's version of the Loewner Differential Equation on an annulus (see \cite{Komatu_proof}),
\begin{equation}
\label{LDE_out}
\partial_t \phi_t (z) =  \phi_t (z)  \mathcal{K}_{r_t} (\phi_t (z),\alpha (t) ) .
\end{equation}
Here, for $\alpha \in \partial \mathbb{D}$ and $r>0$, $\mathcal{K}_r(z,\alpha)$ is the Villat's kernel, defined by $\mathcal{K}_r(z,\alpha) = \mathcal{K}_r \left( \frac{z}{\alpha} \right),$ where
\[ \mathcal{K}_r(z) = \lim\limits_{N \to \infty} \sum\limits^N_{n=-N} \dfrac{r^{2n}+z}{r^{2n}-z} . \]

For our purposes, we need a version of Loewner-type differential equations where the curve grows from the inner boundary circle of $A_{r_0}$.  Let $C_{r_0}$ be the circle centered at $0$ with radius $r_0$. Suppose that $\gamma : [0,T] \to \overline{A_{r_0}}$ is a Jordan arc satisfying $\gamma (0) \in C_{r_0}$ and $\gamma (0,T] \subset A_{r_0}$ such that $A_{r_0} \setminus \gamma (0,t]$ has modulus $p-t$.  Define the inversion map $\rho_t (z)=\dfrac{r_t}{z}$ which is a conformal automorphism of $A_{r_{t}}$ that interchanges inner boundary and outer boundary of $A_{r_t}$. Hence $\rho_t \circ \gamma $ is a Jordan arc satisfying the conditions given at the beginning of this subsection and for this Jordan arc $\rho_t \circ \gamma $, let $\phi_t$ be the corresponding conformal map satisfying (\ref{LDE_out}). Then we define 
\[ \Phi_t(z):= \rho_t \circ \phi_t \circ \rho^{-1}_0 (z) =  \dfrac{r_t}{\phi_t  \left( \dfrac{r_0}{z} \right)} . \]
Clearly $\Phi_t $ is a conformal map from $A_{r_0} \setminus \gamma (0,t]$  onto $A_{r_t}$ with $\Phi_t (\gamma (t)) \in C_{r_t}$. By the chain rule, $\Phi_t$ satisfies the differential equation 
\begin{equation*}
 \partial_t \Phi_t (z)
=  \Phi_t (z)  \left( 1-  \mathcal{K}_{r_t} \left( \dfrac{r_t}{\Phi_t (z)},  e^{-i \beta (t) }  \right) \right)
\end{equation*}
where $\beta (t) $ satisfies $\Phi_t ( \gamma (t) ) = r_t e^{i \beta (t)} \in \overline{A_{r_t}} $.

Let $y_0>0$ be a fixed point in $A_{r_0}$. By composing $\Phi_t$ with a suitable rotation, we can normalize $\Phi_t$ such that $y_t:=\Phi_t(y_0)>0$ for any $t$. Then $\Phi_t $ satisfies
\begin{align}
 \partial_t \Phi_t (z)
&=  \Phi_t (z)  \left( 1-  \mathcal{K}_{r_t} \left( \dfrac{r_t}{\Phi_t (z)},  e^{-i \beta (t) }  \right)  + i  J(r_t,y_t, \beta (t) ) \right) 
\label{eq:LDE_in}
\\
&=  \Phi_t (z)  \left( 1-  \lim\limits_{N \to \infty} \sum\limits_{n=-N}^N \dfrac{r_t^{2n-1} \Phi_t (z) + e^{i\beta (t) } }{r_t^{2n-1} \Phi_t (z) - e^{i\beta (t) } }  + i  J(r_t,y_t, \beta (t) ) \right) 
\nonumber
\end{align}
for some function $ J(r_t,y_t, \beta (t) )$. Notice that the normalization accounts for  multiplying a rotation factor to $\Phi_t$ at each time $t$. So the function $ J(r_t,y_t, \beta (t) )$ is real-valued. Also, since $y_t >0$ for any $t$, we have 
\[ \partial_t \mathrm{Im} \left( \log y_t \right) =0 \]
and hence we have 
\[ J(r_t,y_t, \beta (t) ) = \mathrm{Im} 
 \left[ \mathcal{K}_{r_t} \left( \dfrac{r_t}{y_t}, e^{-i\beta (t) } 
\right) \right]. \]
We call the function $\beta(t)$ the \textit{Loewner driving function }of the curve $\gamma$. We now define 
\[
Q(r,y,\theta,w) :=  1-  \lim\limits_{N \to \infty} \sum\limits_{n=-N}^N \dfrac{r^{2n-1} w + e^{i \theta} }{r^{2n-1} w - e^{i\theta} }  + i  J(r,y, \theta),
\]

\[
R (r,\theta; w) := 
\mathrm{Re}\left[ 
Q(r,y,\theta,w)
\right],
\qquad 
\mbox{$r>0$}
\]
and
\[
I (r,y,\theta;w) := 
\mathrm{Im}\left[ 
Q(r,y,\theta,w)
\right],
\qquad 
\mbox{$r>0$, $y>0$}.
\]
Hence
\[ \frac{ \partial_t \Phi_t (z)}{\Phi_t (z)}  = Q(r_t,y_t,\beta (t) , \Phi_t (z))  = R(r_t,\beta (t) , \Phi_t (z))   +i I (r_t,y_t,\beta (t), \Phi_t (z))   \]
Substituting $z=y_0$ in the above equation and noting that $y_t =\Phi_t (y_0)>0$, we get 
\begin{equation}
\label{LDE}
\partial_t \log y_t = P(r_t , y_t, \beta (t) )
\end{equation}
where
\[
P (r,y,\theta) := R(r,\theta,y)
=
\mathrm{Re}\left[ 
1 - \lim\limits_{N \to \infty} \sum\limits_{n=-N}^N \dfrac{r^{2n-1} y + e^{i \theta }} { r^{2n-1} y - e^{i \theta} }
\right]
\]
for $0<r < y<1$.

\subsection{Multi-slit Loewner-Type Differential Equation}
\label{subsec:Prelim-Muti}

In this subsection, we will develop a version of the Loewner differential equation for multiple slits on an annulus.  

Write $r_0=e^{-p}$. Let $y_0$ be a point in $A_{r_0}$, $\gamma_1:[0,T_1] \to \overline{A_{r_0}}$ and $\gamma_2:[0,T_2] \to \overline{A_{r_0}}$ be Jordan arcs such that $\gamma_1[0,T_1] \cap \gamma_2[0,T_2] =\emptyset$. 
Moreover, $\gamma_1$ and $\gamma_2$ are parametrized such that  $A_{r_0} \setminus \left( \gamma_1(0,t_1] \cup \gamma_2(0,t_2] \right)$ has modulus $p-|\tau|$, where $\tau = ( t_1 , t_2)$ and $|\tau| := t_1 +t_2 $. 

We now make the following construction which is illustrated in Figure \ref{fig:fig1}. 
\begin{itemize} 

\item Let $y_{(0,0)}:=y_0$.

\item Let $\widetilde{\Phi}_{(t_1,0)}$ be the conformal map of $A_{r_0} \setminus \gamma_1(0,t_1]$ onto $A_{r_{t_1}}$ with  $\widetilde{y}_{(t_1,0)} := \widetilde{\Phi}_{(t_1,0)} (y_{(0,0)}) >0 $.

\item Let $\widehat{\Phi}_{(0,t_2)}$ be the conformal map of $A_{r_0} \setminus \gamma_2(0,t_2]$ onto $A_{r_{t_2}}$ with  $\widehat{y}_{(0,t_2)} := \widehat{\Phi}_{(0,t_2)} (y_{(0,0)}) >0 $.

\item Let $\widetilde{\Phi}_{\tau}$ be the conformal map of $A_{r_{t_2}} \setminus \widehat{\gamma}_1(0,t_1]$ onto $A_{r_{|\tau|}}$ with  $\widetilde{y}_{\tau} = \widetilde{\Phi}_{\tau} (\widehat{y}_{(0,t_2)}) >0 $.
Here $\widehat{\gamma}_1= \widehat{\Phi}_{(0,t_2)} \circ \gamma_1$ is the image of $\gamma_1(0,t_1]$ under $\widehat{\Phi}_{(0,t_2)}$. Note that because of the conformal invariance and the fact that $A_{r_0} \setminus \left( \gamma_1(0,t_1] \cup \gamma_2(0,t_2] \right)$ has modulus $p-|\tau|$, we have $A_{r_{t_2}} \setminus \widehat{\gamma}_1 (0,t_1]$  has modulus $p-|\tau|$.

\item Let $\widehat{\Phi}_{\tau}$ be the conformal map of $A_{r_{t_1}} \setminus \widetilde{\gamma}_2(0,t_2]$ onto $A_{r_{|\tau|}}$ with  $\widehat{y}_{\tau} = \widehat{\Phi}_{\tau} (\widetilde{y}_{(t_1,0)}) >0 $. 
Here $\widetilde{\gamma}_2= \widetilde{\Phi}_{(t_1,0)} \circ \gamma_2$ is the image of $\gamma_2 (0,t_2]$ under $ \widetilde{\Phi}_{(t_1,0)}$. Note that because of the conformal invariance and the fact that $A_{r_0} \setminus \left( \gamma_1(0,t_1] \cup \gamma_2(0,t_2] \right)$ has modulus $p-|\tau|$, we have $A_{r_{t_1}} \setminus \widetilde{\gamma}_2 (0,t_2]$  has modulus $p-|\tau|$.

\item Let $\Phi_{\tau} $ be the conformal map of $A_{r_0} \setminus  \left( \gamma_1 (0,t_1] \cup \gamma_2 (0,t_2] \right)$ onto $A_{r_{|\tau|}}$ with $y_{\tau} := \Phi_{\tau} (y_{(0,0)})$.

\item Let $r_{|\tau|} e^{i \xi_1 (\tau) } = \Phi_{\tau} ( \gamma_1 (t_1))$ and $r_{|\tau|} e^{i \xi_2 (\tau)} = \Phi_{\tau} ( \gamma_2 (t_2))$.
\end{itemize}

\begin{figure}
\centering
\includegraphics[width=12cm]{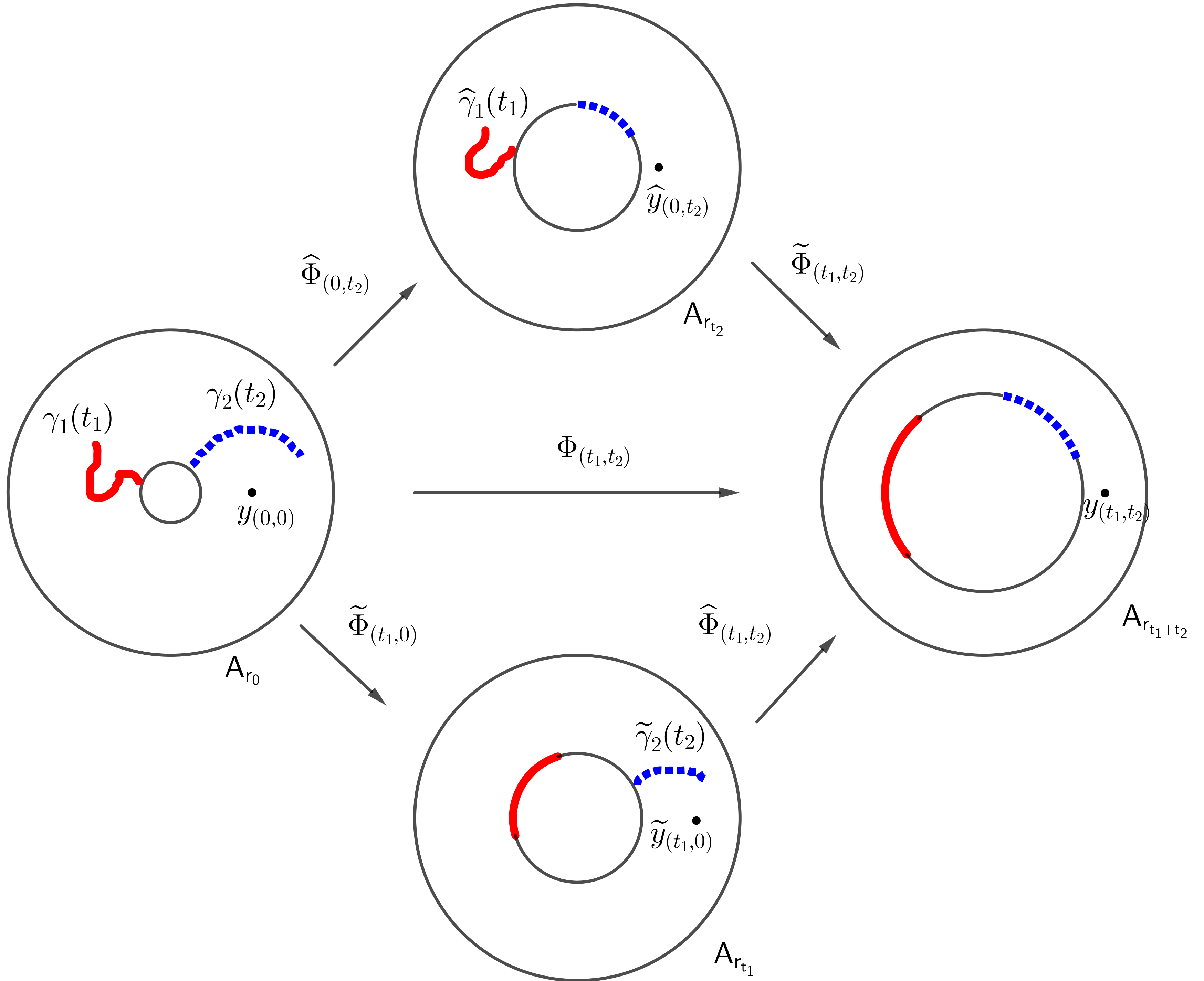}
\caption{Construction of two slit Loewner differential equation.}
\label{fig:fig1}
\end{figure}

Note that the only conformal automorphism of an annulus which fixes a point and does not interchange the boundary components is the identity mapping. Hence, we have
\begin{equation}
\label{Eq3}
\Phi_{\tau} = \widehat{\Phi}_{\tau} \circ \widetilde{\Phi}_{(t_1,0)} = \widetilde{\Phi}_{\tau} \circ \widehat{\Phi}_{(0,t_2)} 
\end{equation}
and 
\[ y_{\tau} = \Phi_{\tau} (y_{(0,0)}) = \widehat{\Phi}_{\tau} (\widetilde{y}_{(t_1,0)}) =  \widetilde{\Phi}_{\tau} (\widehat{y}_{(0,t_2)}). \]
Then $\widetilde{\Phi}_{\tau}$ satisfies (\ref{eq:LDE_in})
\[ \partial_{t_{1}} \widetilde{\Phi}_{\tau}(w) = \widetilde{\Phi}_{\tau} (w) 
Q( r_{|\tau|} , y_\tau  ,\xi_1 (\tau) , \widetilde{\Phi}_{\tau}(w)). \]
By substituting $w=\widehat{\Phi}_{(0,t_{2})}(z)$ and $w=\widehat{y}_{(0,t_2)}$ respectively, and by (\ref{Eq3}) we have
\[\partial_{t_{1}} \Phi_{\tau}(z) = \Phi_{\tau} (z) 
Q( r_{|\tau|} , y_\tau  ,\xi_1 (\tau) , \Phi_{\tau}(z))\]
and
\begin{equation}
\label{LDE:tau-t2}
\partial_{t_1} \log  y_{\tau}  =
 P(r_{|\tau|},y_{\tau}, \xi_1 (\tau)).
\end{equation}
Similarly, $\widehat{\Phi}_{\tau}$ also satisfies (\ref{eq:LDE_in}), 
\[ \partial_{t_2}\widehat{\Phi}_{\tau}(w) = \widehat{\Phi}_{\tau}(w) 
Q( r_{|\tau|} , y_\tau  ,\xi_2 (\tau) , \widehat{\Phi}_{\tau}(w)). \]
Substituting $w=\widetilde{\Phi}_{(t_{1},0)}(z)$ and $w=\widetilde{y}_{(t_1,0)}$ respectively, we have 
\[\partial_{t_2}\Phi_{\tau}(z) = \Phi_{\tau}(w) 
Q( r_{|\tau|} , y_\tau  ,\xi_2 (\tau) , \Phi_{\tau}(w))\]
and
\begin{equation}
\label{LDE:tau-t1}
\partial_{t_2}\log  y_{\tau} 
= P(r_{|\tau|},y_{\tau},\xi_2 (\tau)).
\end{equation}

Now let $\gamma_3:[0,T_3] \to \overline{A_{r_0}}$ be another Jordan arc such that $\gamma_3(0,T_3] \cap \gamma_2(0,T_2] = \emptyset$ and $\gamma_3(0,T_3] \cap \gamma_1(0,T_1] = \emptyset$. Moreover, $\gamma_1,\gamma_2$ and $\gamma_3$ are parametrized such that $A_{r_0} \setminus \left(  \gamma_1(0,t_1]  \cup \gamma_2(0,t_2] \cup \gamma_3(0,t_3] \right)$ has modulus $p-|\tau|$, where $\tau =(t_1,t_2,t_3)$ and $|\tau| = t_1+t_2+t_3$. A similar construction to the above allows us to find a family of conformal maps 
\[ \Phi_{\tau}: A_{r_0} \setminus \left(  \gamma_1(0,t_1]  \cup \gamma_2(0,t_2] \cup \gamma_3(0,t_3]  \right) \to A_{r_{|\tau|}}\] 
with $y_\tau :=\Phi_\tau (y_{(0,0,0)}) >0$, where $y_{(0,0,0)}=y_0$. These satisfy
\[\partial_{t_i}\Phi_{\tau}(z) = \Phi_{\tau}(w) 
Q( r_{|\tau|} , y_\tau  ,\xi_i (\tau) , \Phi_{\tau}(w))\]
and
\[\partial_{t_i}\log  y_{\tau} 
= P(r_{|\tau|},y_{\tau},\xi_i (\tau))\]
for $i=1,2,3$. Moreover, suppose that $t_1,t_2,t_3$ are real-valued functions of $s$, that is $t_1=t_1(s)$, $t_2=t_2(s)$ and $t_3=t_3(s)$. By the chain rule, $\Phi_\tau$ and $y_\tau$ satisfies
\begin{equation}
\label{MLDE}
\partial_{s} \log \Phi_{\tau} (z)
=   \sum\limits_{i=1}^3  (\partial_s t_i) Q \left( r_{|\tau|},y_\tau, \xi_i (\tau) , \Phi_{\tau} (z) \right) 
\end{equation}
and 
\begin{equation}
\label{MLDE2}
\partial_{s}\log  y_{\tau} 
= \sum\limits_{i=1}^3  (\partial_s t_i) P(r_{|\tau|},y_{\tau},\xi_i (\tau) ) 
\end{equation}

%

\section{Proof of Theorem \ref{MainResult3}}
\label{sec:main_proof1}

\subsection{Idea of the Proof}
\label{subsec:idea_of the proof}
Suppose that $y>0$. Let $E\subset\mathbb{D}$ be a closed set with $0\not\in E$ and $|z|\geq y$ for all $z\in E$. Let $g$ be a conformal map of an annulus $A_{r}$ onto $\mathbb{D}\setminus E$. By further composing with a rotation we can assume that $g^{-1}(0) > 0$. We need to show that $g^{-1}(0)\geq y$. 

We first consider the case where $E$ is the union of a circular arc $L$ (with radius $y$ centred at $0$) and a Jordan arc starting from $L$. Proposition \ref{dense} will allow us to obtain the general case by an approximation argument.

Denote by $f$ the conformal map of the annulus $A_{{r}_{0}}$ onto a circularly slit domain $\mathbb{D}\setminus L$ which maps a point $y\in A_{r_{0}}$ with $y>0$ to $0$. Let $\widetilde{\gamma}$ be a Jordan arc growing from the circular arc $L$. In other words,
$\widetilde{\gamma} : [0,T] \to \mathbb{D}$ is a Jordan arc satisfying $\widetilde{\gamma} (0) \in L$ and $\widetilde{\gamma} (0,T] \subset \Omega$, such that $\Omega \setminus \widetilde{\gamma} (0,t]$ has modulus $-\log r_t = -(\log r_0)-t$. Now, we let $\gamma = f^{-1} \circ \widetilde{\gamma}$. Let $y_0=y$ and we then define $y_{t}$ and $\beta(t)$ as in Section \ref{sect:prelim:subsect:LDE}. 

Now let $f_{t}$ be the conformal map of the annulus $A_{{r}_{t}}$ onto a circularly slit domain $\mathbb{D} \setminus L_t$, where $L_t$ is a circular arc, which maps the point $y_{t}\in A_{r_{t}}$ to $0$. The maps $f_{t}$ can each be extended continuously to the inner circle $C_{r_{t}}$ of $A_{r_{t}}$ and $f_{t}$ maps $C_{r_{t}}$ onto the circular arc $L_t$. The preimages under $f_{t}$ of the two endpoints of the circular arc $L_t$ then partition $C_{r_{t}}$ into two circular arcs which are symmetric under the transformation $z\mapsto \overline{z}$ according to Lemma \ref{lemma:sym}. We call these circular arcs $\Gamma^{+}_{t}$ and $\Gamma^{-}_{t}$ respectively, where $\Gamma_{t}^{+}$ is the circular arc which intersects the negative real axis.

It can be shown that $\beta(t)\in\Gamma^{+}_{t}$ for all $t \in [0,T]$ implies that $y_{t}$ is strictly increasing. Similarly, $\beta(t)\in\Gamma^{-}_{t}$ for all $t \in [0,T]$ implies that $y_{t}$ is strictly decreasing. 

It would thus be sufficient to show that if $|\widetilde{\gamma}(t)|>y$, then $\beta(t)\in\Gamma^{+}_{t}$ for all $t \in [0,T]$. However, in general, this may not be the case. The idea of our method is as follows: Since $|\widetilde{\gamma}(t)|>y$, we can extend the length of $L$ to a longer circular arc $L^{*}_0$ without ever intersecting $\widetilde{\gamma}$. As the curve $\widetilde{\gamma}$  grows from $L^{*}_0$, we simultaneously shrink $L^{*}_{0}$ to $L^{*}_t$ such that at time $T$, $L^{*}_T$ coincides with $L$. By choosing a suitable rate at which $L^{*}_t$ shrinks to $L$ from each end of the circular arc $L^{*}_{0}$, we will be able to show that the preimage of 0 at each $t$, $y^{*}_{t}$, is now strictly increasing. This will prove the desired result since $y_{T}^{*}=y_{T}$ (as $L^{*}_{T}=L$ ) and $y_{0}^{*}=y_{0}$. The second equality follows from the fact that extending the length of $L$ to get $L^{*}_0$ does not change the preimage of $0$ by Lemma \ref{Lemma1}. It is for this part of the argument that we will need to use the three-slit Loewner differential equation from Section \ref{subsec:Prelim-Muti}: the three slits will be the curve $\gamma$, the circular arc in the clockwise direction and the circular arc in the anticlockwise direction.

The rest of this section provides the formal construction of the above argument.

\subsection{Properties of $P(r,y,\theta)$ in the Loewner-Type Differential Equation}
\label{sect:main_cal}
In this subsection, we study properties of the differential equation in (\ref{LDE}).
The following lemma gives some properties of $P(r,y,\theta )$ that we will need.
\begin{lemma}
\label{lem:P}
For $y \in (r,1)$, we have
\begin{enumerate} 
\item $P(r,y,\theta)= P(r,y,2\pi-\theta)$ for $\theta \in [0,2\pi]$.
\item $P(r,y,\theta)$ is increasing in $\theta$ for $\theta \in [0,\pi]$.
\item $P(r,y,\theta)$ is decreasing in $\theta$ for $\theta \in [\pi,2\pi]$.
\end{enumerate}
\end{lemma}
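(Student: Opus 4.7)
Part 1 follows from a direct computation. With $A=r^{2n-1}y\in\mathbb{R}$ and $B=e^{i\theta}$, the identity $\mathrm{Re}\!\left[\frac{A+B}{A-B}\right]=\frac{|A|^{2}-|B|^{2}}{|A-B|^{2}}$ yields
\[
P(r,y,\theta)=1-\lim_{N\to\infty}\sum_{n=-N}^{N}\frac{r^{4n-2}y^{2}-1}{r^{4n-2}y^{2}-2\,r^{2n-1}y\cos\theta+1}.
\]
Each summand depends on $\theta$ only through $\cos\theta$, and since $\cos\theta=\cos(2\pi-\theta)$, the symmetry claim of part 1 follows.

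\textbf{Reduction to a harmonic function.} For parts 2 and 3 the key is to recognise the series in the definition of $P$ as the Villat kernel. Dividing numerator and denominator of each summand by $y$ and multiplying by $r$, with $w:=re^{i\theta}/y$ one obtains
\[
\sum_{n}\frac{r^{2n-1}y+e^{i\theta}}{r^{2n-1}y-e^{i\theta}}=\sum_{n}\frac{r^{2n}+w}{r^{2n}-w}=\mathcal{K}_{r}(w),
\]
so that $P(r,y,\theta)=1-\mathrm{Re}[\mathcal{K}_{r}(w)]$. Since $r<y<1$, the point $w$ lies in the open annulus $A_{r}$ and avoids every pole $r^{2n}$ of $\mathcal{K}_{r}$. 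Differentiating in $\theta$ gives
\[
\partial_{\theta}P(r,y,\theta)=u(w),\qquad u(w):=\mathrm{Im}\bigl[w\,\mathcal{K}_{r}'(w)\bigr],
\]
a function harmonic on $A_{r}$. The plan is to show $u>0$ on the upper half annulus $A_{r}^{+}:=A_{r}\cap\{\mathrm{Im}\,w>0\}$ via the minimum principle. Since $w\in A_{r}^{+}$ for $\theta\in(0,\pi)$, and the conjugation symmetry $\overline{\mathcal{K}_{r}(\overline{w})}=\mathcal{K}_{r}(w)$ implies $u(\overline{w})=-u(w)$, this simultaneously yields parts 2 and 3.

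\textbf{Boundary behaviour of $u$.} Three facts are needed. \emph{(a)} A principal-value pairing of summands ($n\leftrightarrow -n$ on $|w|=1$ and $n\leftrightarrow 1-n$ on $|w|=r$, each leaving a single tail term whose limit can be computed) shows that $\mathrm{Re}[\mathcal{K}_{r}]$ is constant on each boundary circle, equal to $0$ on $\{|w|=1\}\setminus\{1\}$ and to $1$ on $\{|w|=r\}$. Hence on either boundary circle $\mathcal{K}_{r}$ is a real constant plus a purely imaginary function of the angle; tangential differentiation gives $w\,\mathcal{K}_{r}'(w)\in\mathbb{R}$, i.e.\ $u\equiv 0$ on $\partial A_{r}\setminus\{1\}$. \emph{(b)} By the conjugation symmetry above, $\mathcal{K}_{r}$ and $\mathcal{K}_{r}'$ are real on the real axis inside $A_{r}$, so $u\equiv 0$ on the segments $(r,1)$ and $(-1,-r)$. \emph{(c)} The $n=0$ summand contributes a pole $(1+w)/(1-w)$ at $w=1$, so $w\,\mathcal{K}_{r}'(w)\sim 2/(1-w)^{2}$ there; writing $w=1-se^{i\psi}$ (with $\cos\psi>0$ so that $w\in A_{r}$ near $1$) gives $u(w)\sim -2\sin(2\psi)/s^{2}$, and for approach from $A_{r}^{+}$ one has $\psi\in(-\pi/2,0)$, whence $u(w)\to+\infty$.

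\textbf{Conclusion.} Combining (a)--(c), $u$ is harmonic on $A_{r}^{+}$, vanishes on $\partial A_{r}^{+}\setminus\{1\}$ (the union of two circular arcs and two real-axis segments), and blows up to $+\infty$ at the corner $w=1$. Applying the minimum principle on $A_{r}^{+}\setminus B(1,\epsilon)$ and letting $\epsilon\to 0$, followed by the strong minimum principle, gives $u>0$ on $A_{r}^{+}$. Thus $\partial_{\theta}P(r,y,\theta)>0$ for $\theta\in(0,\pi)$, proving part 2; part 3 follows either from part 1 via the substitution $\theta\mapsto 2\pi-\theta$, or directly from $u(\overline{w})=-u(w)$. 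The main obstacle I anticipate is (a): carefully identifying the unpaired tail term in the symmetric partial sum and evaluating its limit on each boundary circle is somewhat delicate, and explains why $\mathrm{Re}[\mathcal{K}_{r}]$ takes the two different boundary constants $0$ and $1$.
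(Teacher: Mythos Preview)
Your argument is correct in outline and takes a genuinely different route from the paper. For part 1 the two proofs are essentially the same. For parts 2 and 3, the paper rewrites $P$ via a Jacobi theta function $A(z;p)$, shows $\partial_\theta P = 2\,\mathrm{Im}\,G_1$ with $G_1=(A''A-(A')^2)/A^2$, identifies $G_1$ as an elliptic function equal to $-\wp(z+ip)+c$ with $c$ real, and finally invokes the classical fact that $\wp$ maps the open fundamental rectangle conformally onto the lower half-plane to get $\partial_\theta P>0$ on $(0,\pi)$. Your substitution $w=re^{i\theta}/y$ reducing $P$ to $1-\mathrm{Re}\,\mathcal{K}_r(w)$ and then studying the harmonic function $u(w)=\mathrm{Im}[w\mathcal{K}_r'(w)]$ via the minimum principle is a more elementary alternative that avoids the theory of elliptic functions; in effect you are reproving, by hand, exactly the positivity statement that the paper extracts from the mapping property of $\wp$. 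Both arguments are really about the same harmonic function on a half-annulus (the two are related by the change of variables $z=\theta+i\log y$ versus $w=(r/y)e^{i\theta}$), but the justifications are different in flavour.

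One point deserves more care. Your claim (c) that $u(w)\to+\infty$ as $w\to 1$ from $A_r^{+}$ is only true along rays (fixed $\psi$); if $w\to 1$ with $\psi\to 0$ or $\psi\to-\pi/2$ the leading term $-2\sin(2\psi)/s^{2}$ can stay bounded or tend to $0$, and the bounded remainder might then be negative. So on $\partial B(1,\epsilon)\cap A_r^{+}$ you do not directly get $u\ge 0$. The fix is short: write $u=u_{\mathrm{sing}}+u_{\mathrm{reg}}$ with $u_{\mathrm{sing}}(w)=\mathrm{Im}\bigl[2w/(1-w)^2\bigr]$, the contribution of the $n=0$ term. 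Since $w\mapsto w/(1-w)^2$ is the Koebe function, which maps the upper half-disk $\mathbb{D}^{+}$ conformally onto the upper half-plane, one has $u_{\mathrm{sing}}>0$ on $A_r^{+}\subset\mathbb{D}^{+}$ and $u_{\mathrm{sing}}=0$ on $\partial\mathbb{D}^{+}\setminus\{1\}$. The regular part $u_{\mathrm{reg}}$ is the imaginary part of a function holomorphic in a full neighbourhood of $w=1$, and $u_{\mathrm{reg}}(1)=0$ (the remaining series is real at $w=1$), so $u_{\mathrm{reg}}=o(1)$ uniformly on $\overline{B(1,\epsilon)}\cap\overline{A_r^{+}}$. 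Hence $u\ge -\eta$ on $\partial B(1,\epsilon)\cap A_r^{+}$ for any $\eta>0$ once $\epsilon$ is small; the minimum principle then gives $u\ge-\eta$ on $A_r^{+}$, and letting $\eta\to 0$ followed by the strong minimum principle gives $u>0$ as you claimed. Your anticipated obstacle (a) is, by contrast, routine: the pairing you describe works exactly as stated, with the single tail term on $|w|=r$ tending to $1$.
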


\begin{proof}
As $y>0$, we can see that
\[
1 - \lim\limits_{N \to \infty} \sum\limits_{n=-N}^N \dfrac{r^{2n-1} y + e^{i (2\pi-\theta) }} { r^{2n-1} y - e^{i (2\pi -\theta )} }
= 
\overline{ 
1 - \lim\limits_{N \to \infty} \sum\limits_{n=-N}^N \dfrac{r^{2n-1} y + e^{i \theta }} { r^{2n-1} y - e^{i \theta } }
}.
\]
Taking real parts of the above equation proves part 1.

To prove part 2, we write $r=e^{-p}$, $p>0$ and define
\[
A (z; p) = 
\prod\limits_{k=1}^\infty 
\left( 1- e^{-2kp} \right)
 \left( 1- e^{-(2k-1)p+iz} \right)
  \left( 1- e^{-(2k-1)p-iz} \right)
\]
This function $A$ is related to the Jacobi theta function $\vartheta_4$ defined in Section 13.19 of \cite{Book:theta}. Direct calculations show that 
\begin{align*}
    P(r,y,\theta)
    &= 2 \mathrm{Im} \left( 
\frac{    A' (\theta + i \ln y ; p)}{    A (\theta + i \ln y ; p)}
    \right)
\end{align*}
where $A'(z;p) = \partial_z A(z;p)$. It then follows that 
\begin{align*}
    \partial_\theta P(r,y,\theta)
    &= 2 \mathrm{Im} \left( 
\frac{    A'' (\theta + i \ln y ; p) A (\theta + i \ln y ; p) -  \left( A' (\theta + i \ln y ; p) \right)^2 }{   \left( A (\theta + i \ln y ; p) \right)^2}
    \right).
\end{align*}
Define 
\[
G_1 (z;p) = 
\frac{    A'' (z ; p) A (z ; p) -  \left( A' (z  ; p) \right)^2 }{   \left( A (z ; p) \right)^2}.
\]
Then $G_1 (z;p)$ is an elliptic function (of $z$) with periods $2\pi$ and $2ip$. Note that $G_1$ has poles of order $2$ at $z=2n\pi + (2m-1)ip$ for any $n,m\in \mathbb{Z}$ and these are the only poles of $G_1$. Let $\wp$ be the Weierstrass's $\wp$ function with period $2\pi$ and $2ip$. It follows that there exists an constant $c_1$ such that $G_2 (z):=G_1 (z;p)-c_1 \wp (z + ip)$ has no pole on $\mathbb{C}$. Thus $G_2 (z) = c_2$ for some constant $c_2$ by the Liouville's theorem. This means that we have 
\[
G_1 (z;p) = c_1 \wp (z + ip) + c_2
\]
for some constant $c_1 ,c_2$. By considering the Laurent series expansions of $G_1$ and $\wp$ and comparing coeffients, we have $c_1 =-1$ and $c_2$ is real, that is, 
\[
     G_1 ( z ; p ) = - \wp \left( z+ ip \right) + c 
\]
for some real constant $c$. (For details, see the demonstration by Dixit and Solynin \cite{Paper:kernel} for equation (3.3) of \cite{Paper:kernel}.)
Hence, 
\begin{align*}
    \partial_\theta P(r,y,\theta)
    &= - 2\mathrm{Im} \left( 
\wp \left( z +ip \right)
    \right).
\end{align*}
Note that $\wp $ maps the interior of the rectangle $R$ with vertices $z_1= 0$, $z_2= -ip$, $z_3= \pi - ip$ and  $z_4= \pi$ conformally into the lower half plane $\mathcal{H}=\lbrace z \in \mathbb{C} \: : \: \mathrm{Im} (z)<0 \rbrace$ and maps $\partial R$ injectively onto the real line (See for example, Section 13.25 of \cite{Book:theta}). Then, for any fixed $p = - \ln r$, for any $\theta \in (0, \pi)$ and for any $y \in (r,1)$, we have $z= \theta + i \ln y$ lies inside the interior of $R$ and hence $\partial_\theta P(r,y,\theta) >0$. Also, when $\theta=0$ or $\theta= \pi$, $z= \theta + i \ln y$ lies on $\partial R$ and hence $\partial_\theta P(r,y,\theta) =0$. 
This proves part 2. Part 3 follows from part 1 and part 2.
\end{proof}
As a consequence of Lemma \ref{lem:P}, we have the following lemma. 
\begin{lemma}
\label{lem:theta}
Let $y \in (r,1)$. Suppose $\theta_1,\theta_2 \in [0,2\pi)$ satisfy $|\pi-\theta_1| \leq |\pi-\theta_2|$. Then we have 
\[ P(r,y,\theta_1) \geq P(r,y,\theta_2). \]
\end{lemma}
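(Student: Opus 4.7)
The plan is to reduce the statement to a direct application of the symmetry and monotonicity recorded in Lemma~\ref{lem:P}. The key observation is that the three parts of Lemma~\ref{lem:P} together say that $P(r,y,\theta)$, as a function of $\theta$, depends only on the distance $|\pi-\theta|$ of $\theta$ from $\pi$, and is a \emph{decreasing} function of this distance.

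More precisely, first I would set $\psi_i := |\pi-\theta_i|\in[0,\pi]$ for $i=1,2$, so that the hypothesis becomes $\psi_1\le \psi_2$. Next I would use part~(1) of Lemma~\ref{lem:P} to replace each $\theta_i$ by a representative in $[0,\pi]$: if $\theta_i\in[0,\pi]$ then $\theta_i=\pi-\psi_i$ directly, while if $\theta_i\in[\pi,2\pi)$ then $\theta_i=\pi+\psi_i$ and
\[
P(r,y,\theta_i)=P(r,y,\pi+\psi_i)=P(r,y,2\pi-(\pi+\psi_i))=P(r,y,\pi-\psi_i)
\]
by the symmetry $P(r,y,\theta)=P(r,y,2\pi-\theta)$. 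In either case, $P(r,y,\theta_i)=P(r,y,\pi-\psi_i)$.

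Finally, since $\psi_1\le\psi_2$ gives $\pi-\psi_1\ge\pi-\psi_2$, and both $\pi-\psi_1$ and $\pi-\psi_2$ lie in $[0,\pi]$, part~(2) of Lemma~\ref{lem:P} (monotonicity of $P(r,y,\cdot)$ on $[0,\pi]$) yields
\[
P(r,y,\theta_1)=P(r,y,\pi-\psi_1)\ge P(r,y,\pi-\psi_2)=P(r,y,\theta_2),
\]
which is the desired inequality. There is no real obstacle here — the lemma is essentially a bookkeeping corollary of Lemma~\ref{lem:P}, and the only subtlety is handling the two cases $\theta_i\in[0,\pi]$ versus $\theta_i\in[\pi,2\pi)$ uniformly via the symmetry in part~(1).
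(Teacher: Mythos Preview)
Your proof is correct and is essentially the same as the paper's: both reduce the inequality to the symmetry and monotonicity properties of $P(r,y,\cdot)$ recorded in Lemma~\ref{lem:P}. The only difference is cosmetic---the paper argues by splitting into the cases $0\le\theta_2\le\theta_1\le\pi$ and $\pi\le\theta_1\le\theta_2<2\pi$ (citing parts~2 and~3) and then reduces the mixed cases via part~1, whereas you introduce $\psi_i=|\pi-\theta_i|$ to handle all cases uniformly.
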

\begin{proof}
When $\pi\leq\theta_{1}\leq\theta_{2}\leq2\pi$ or $0\leq\theta_{2}\leq\theta_{1}\leq \pi$, this result is a direct consequence of parts 2 and 3 of Lemma \ref{lem:P}. The remaining cases reduce to the above using part 1 in Lemma \ref{lem:P}.
\end{proof}
\subsection{The key result}
\label{subsec:proof:yt}
Let $\Omega$ be a circularly slit disk $\mathbb{D} \setminus L$ where $L$ is a circular arc centered at $0$ with radius $y_0$ for some $y_0 \in (0,1)$. Let $f$ be a conformal map of $A_{r_0}$ onto $\Omega$ such that $\left| f(z) \right| \to y_0$ as $\left| z \right| \to r_0$.
By Lemma \ref{Lemma1}, we have $|f^{-1}(0)|=y_0$. By composing $f$ with a rotation if necessary, we can assume without loss of generality that $f^{-1}(0)=y_0$. 
Suppose that $\widetilde{\gamma} : [0,T] \to \mathbb{D}$ is a Jordan arc satisfying $\widetilde{\gamma} (0) \in L$ and $\widetilde{\gamma} (0,T] \subset \Omega$, such that $\Omega \setminus \widetilde{\gamma} (0,t]$ has modulus $-(\log r_0)-t$ and let $\gamma = f^{-1} \circ \widetilde{\gamma}$.

\begin{figure}
\centering
\includegraphics[width=12cm]{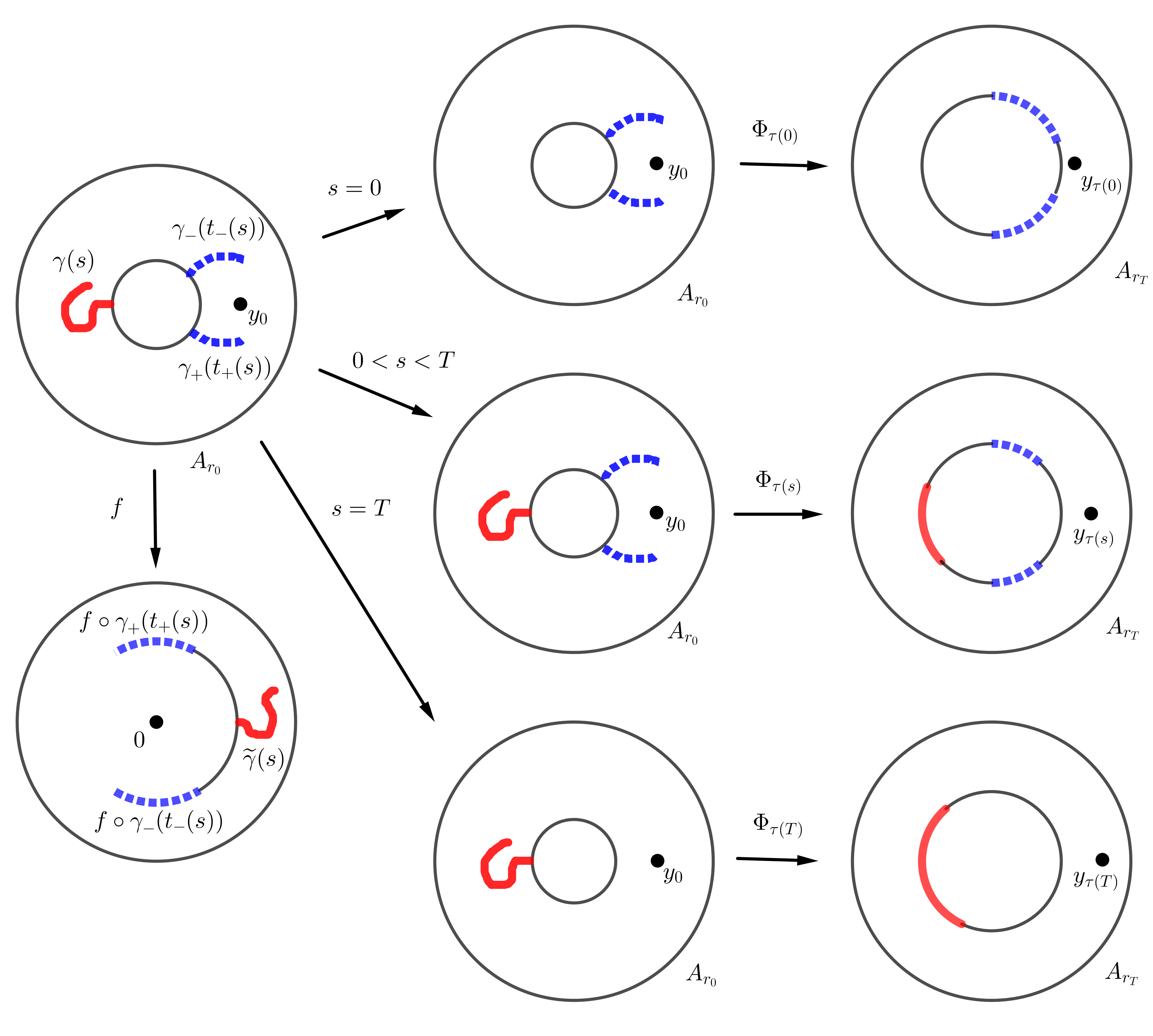}
\caption{Construction of $\Phi_{\tau}$.}
\label{fig:fig2}
\end{figure}

Let $\gamma_+: [0,T_+] \to \overline{A_{r_0}}$ be the Jordan arc such that $f\circ \gamma_+$ starts from an endpoint of the  circular arc $L$ and extends $L$ along the circular arc in the anticlockwise direction, i.e. $|f\circ \gamma_+ (t)|=y_0$ for all $t \in [0,T_+]$ and $f\circ \gamma_+ [0,T_+] \cap L = f\circ \gamma_+ (0)$. Similarly, let $\gamma_-: [0,T_-] \to \overline{A_{r_0}}$ be the Jordan arc such that $f\circ \gamma_-$ starts from an endpoint of the  circular arc $L$ and extends $L$ along the circular arc in the clockwise direction, i.e. $|f\circ \gamma_- (t)|=y_0$ for all $t \in [0,T_-]$ and $f\circ \gamma_-[0,T_-] \cap L = f\circ \gamma_- (0) \neq f\circ \gamma_+ (0)$. Moreover, $\gamma$, $\gamma_-$ and $\gamma_+$ are parametrized such that  $A_{r_0} \setminus \left(  \gamma(0,t_1]  \cup \gamma_+(0,t_2] \cup \gamma_-(0,t_3] \right)$ has modulus $p-|\tau|$, where $\tau= (t_1,t_2,t_3)$ and $|\tau|=t_1+t_2+t_3$.

Since, by assumption, $| \tilde{\gamma}(t)| > y_0$ for all $t \in (0,T]$, we have that $\widetilde{\gamma}$ does not intersect the circle of radius $y_{0}$. Hence $\gamma (0,T] \cap \gamma_-(0,T_-] =\emptyset$ and $\gamma (0,T] \cap \gamma_+ (0,T_+] =\emptyset$. We can also assume that $\gamma_- [0,T_-] \cap \gamma_+ [0,T_+] =\emptyset$. Define $r_{|\tau|}$ and the conformal maps 
\[ \Phi_{\tau} :  A_{r_0} \setminus \left( \gamma (0,t_1] \cup \gamma_+  (0,t_2] \cup \gamma_- (0,t_3] \right) \to A_{r_{|\tau|}} \]
as in Section \ref{subsec:Prelim-Muti} where the three curves are $\gamma,\gamma_{-},\gamma_{+}$. 
Let $y_{\tau}=\Phi_{\tau}(y_{0})$. In particular, for $\tau=(0,0,0)$
\[ y_{(0,0,0)} = \Phi_{(0,0,0)} (y_0)=  \mathrm{id} (y_0) = y_0.\]Also let $\beta(\tau)=\mathrm{Arg} \left( \Phi_{\tau}(\gamma(t_{1})) \right) \in [0,2\pi )$,  $\xi_{+}(\tau)=\mathrm{Arg} \left(  \Phi_{\tau}(\gamma_{+}(t_{2})) \right) \in [0,2\pi)$, $\xi_{-}(\tau)=\mathrm{Arg} \left(  \Phi_{\tau}(\gamma_{-}(t_{3})) \right) \in [0,2\pi)$. Now suppose that $a(s)$ is a real-valued differentiable function for $s\in[0,T]$ such that $\partial_s a(s)\in[0,1]$ for all $s\in[0,T]$. From now on, we assume that $\tau $ is a function of $s$ of the form, 
\begin{equation}
\tau (s) =(s, t_{+}(s) , \ t_{-}(s) )\label{tau}
\end{equation}
for $s \in[0,T]$ where $t_{+}(s)=(T-a(T))-(s-a(s))$ and $t_{-}(s)=a(T)-a(s)$ so that $|\tau (s)| \equiv T$. 
This construction is illustrated in Figure \ref{fig:fig2}.
Since $\partial_{s}a(s)\in[0,1]$, both $a(s)$ and $s-a(s)$ are non-decreasing and hence $t_{+}(s),t_{-}(s)$ are non-negative and non-increasing for $s\in [0,T]$. 

The function $a(s)$ affects the rate that  the circular arc \[L\cup \gamma_{+}[0,T_{+}]\cup \gamma_{-}[0,T_{-}]\] is shrinking from the clockwise end and the anticlockwise end. In the following lemma, we will choose a particular function $a(s)$ that will enable us to apply Lemma \ref{lem:theta}.   
\begin{lemma}
\label{lemma:Fixing}
There exists a real-valued differentiable function $a^{*}(s)$ with $0 \leq \partial_s a^*(s) \leq 1$ and $a^{*}(0) \geq 0$ such that, defining $\tau (s)$ as in equation (\ref{tau}) with $a(s)=a^{*}(s)$, we have 
\[  2\pi -\xi_+(\tau (s) )  = \xi_{-}(\tau (s) )  \]
for all $s \in [0,T] $.
\end{lemma}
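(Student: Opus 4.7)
My plan is to recast Lemma \ref{lemma:Fixing} as the solvability of a first-order ODE for $a^{*}$. Writing $\Xi(\tau) := \xi_{+}(\tau) + \xi_{-}(\tau)$, the desired condition is that $\tau(s)$ stays on the level hypersurface $\mathcal{M} := \{\tau \in \Delta_{T} : \Xi(\tau) = 2\pi\}$ inside the 2-simplex $\Delta_{T} = \{(t_{1},t_{2},t_{3}) : t_{i} \geq 0, t_{1}+t_{2}+t_{3}=T\}$. The constraint $|\tau(s)| \equiv T$ leaves a single scalar degree of freedom $a^{*\prime}(s) \in [0,1]$ that partitions the total shrinkage rate $t_{+}'(s) + t_{-}'(s) = -1$ between the two circular extensions, so the condition $\Xi \equiv 2\pi$ is naturally a first-order ODE.

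First I would pin down an admissible starting configuration at $s = 0$. Set $a^{*}(0) = 0$ and $a^{*}(T) = T/2$, so that $t_{+}(0) = t_{-}(0) = T/2$; rotating coordinates so that $L$ is symmetric about the real axis, the initial extended arc
\[ L_{0}^{*} = L \cup f(\gamma_{+}(0, T/2]) \cup f(\gamma_{-}(0, T/2]) \]
is then also symmetric about the real axis, so $\mathbb{D} \setminus L_{0}^{*}$ is a circularly slit disk with symmetric slit. Applying Lemma \ref{lemma:sym} to the conformal map $g := f \circ \Phi_{\tau(0)}^{-1} \colon A_{r_{T}} \to \mathbb{D} \setminus L_{0}^{*}$ (which sends a positive real point to $0$ and fixes $\partial\mathbb{D}$), the preimages of the endpoints of $L_{0}^{*}$, namely $\Phi_{\tau(0)}(\gamma_{\pm}(T/2))$, are complex conjugates on $C_{r_{T}}$. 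This gives $\Xi(\tau(0)) = 2\pi$, i.e.\ $\tau(0) \in \mathcal{M}$.

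Next I would derive the ODE. Differentiating $\Xi(\tau(s)) = 2\pi$ in $s$ with $t_{+}'(s) = a^{*\prime}(s) - 1$ and $t_{-}'(s) = -a^{*\prime}(s)$ gives
\[ \partial_{1}\Xi - (1 - a^{*\prime})\,\partial_{2}\Xi - a^{*\prime}\,\partial_{3}\Xi = 0, \]
and solving,
\[ a^{*\prime}(s) = \frac{\partial_{1}\Xi - \partial_{2}\Xi}{\partial_{3}\Xi - \partial_{2}\Xi}\bigg|_{\tau(s)}. \]
The partials $\partial_{i}\xi_{\pm}$ can be computed from the three-slit Loewner equation (\ref{MLDE}) by letting $z$ approach the tips $\gamma_{\pm}(t_{\pm})$, so that each becomes a concrete expression in the Villat-Komatu kernel $Q(r_{T}, y_{\tau}, \xi_{j}, \cdot)$. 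Provided the denominator $\partial_{3}\Xi - \partial_{2}\Xi$ does not vanish, Picard--Lindel\"of then yields a unique $C^{1}$ solution locally, which may also be obtained from the implicit function theorem applied to the real-analytic level set $\mathcal{M}$ parameterised by $s = t_{1}$.

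The hard part is twofold. First, verifying the rate bound $0 \le a^{*\prime}(s) \le 1$ amounts to ordering the three partials of $\Xi$; I expect this to follow from the geometric hypothesis $|\widetilde{\gamma}(t)| > y$, which keeps $\beta(\tau(s))$ inside the arc of $C_{r_{T}}$ bounded by $\xi_{\pm}$ through $\pi$, combined with the monotonicity properties of $P$ in Lemma \ref{lem:P}. Second, the solution must be continued all the way to $s = T$, where $t_{\pm}(T) = 0$ and the $\gamma_{\pm}$-slits degenerate, creating boundary singularities for the Loewner vector field at $\gamma_{\pm}(0)$; I would handle this by an a priori argument preventing the trajectory from leaving $\Delta_{T}$ before $s = T$, then extending $a^{*}$ to $T$ by continuity---so that the endpoint consistency $\Xi(\tau(T)) = 2\pi$ follows automatically from continuity of $\xi_{\pm}$ along the flow.
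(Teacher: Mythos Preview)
Your overall strategy---reduce to a scalar ODE for $a^{*}$ along the level set $\Xi=2\pi$ inside the simplex---is natural, but it diverges from the paper's proof and leaves the two decisive steps unhandled. The paper never writes down or solves the ODE $a^{*\prime}=(\partial_{1}\Xi-\partial_{2}\Xi)/(\partial_{3}\Xi-\partial_{2}\Xi)$. Instead it fixes a small $\epsilon>0$, replaces the tip images $r_{T}e^{i\xi_{\pm}(\tau)}$ by nearby interior points $u(s)=\Phi_{\tau(s)}(\gamma_{+}(t_{+}(s+\epsilon)))$ and $v(s)=\Phi_{\tau(s)}(\gamma_{-}(t_{-}(s+\epsilon)))$, and computes $\partial_{s}\mathrm{Arg}(u)+\partial_{s}\mathrm{Arg}(v)$ as an explicit function of $\lambda=\partial_{s}a$. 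At $\lambda=0$ this quantity is dominated by the simple pole of the Villat kernel at $\xi_{+}$ (residue of sign forcing $-\infty$); at $\lambda=1$ by the pole at $\xi_{-}$ (forcing $+\infty$). The intermediate value theorem then produces $\lambda_{\epsilon}(s)\in[0,1]$ for every $s$, and the pointwise limit $\lambda^{*}=\lim_{\epsilon\to 0}\lambda_{\epsilon}$ is integrated to yield $a^{*}$. Thus the rate bound $0\le\partial_{s}a^{*}\le 1$ is a \emph{consequence} of the pole structure of $Q$, obtained simultaneously with existence, and no regularity of the driving functions is needed.

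Against this, your proposal has two concrete gaps. First, the partials $\partial_{2}\xi_{+}$ and $\partial_{3}\xi_{-}$ are ``self-interaction'' derivatives: they require differentiating $\Phi_{\tau}$ at the very tip where the Loewner vector field in \eqref{MLDE} has a simple pole, so ``letting $z$ approach the tips'' is precisely where the naive computation breaks down; making sense of these derivatives needs a separate local expansion at the branch point $\Phi_{\tau}'(\gamma_{\pm}(t_{\pm}))=0$, which you do not supply and which the paper's $\epsilon$-regularization is designed to avoid. Second, you attribute the bound $0\le a^{*\prime}\le 1$ to the geometric hypothesis $|\widetilde{\gamma}|>y$ together with Lemma~\ref{lem:P}, but that hypothesis is not used anywhere in the proof of Lemma~\ref{lemma:Fixing}; it enters only later in Proposition~\ref{thm:key}. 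The bound here comes purely from the sign of the kernel's residues, and without that argument your ODE has no a priori reason to stay in $[0,1]$. A minor third point: you prescribe both $a^{*}(0)=0$ and $a^{*}(T)=T/2$, which over-determines a first-order equation. In fact the initial symmetry $\Xi(\tau(0))=2\pi$ holds for \emph{every} choice of $t_{+}(0)+t_{-}(0)=T$, since at $s=0$ the image domain is always a circularly slit disk and Lemma~\ref{lemma:sym} (after a rotation) applies regardless of how the arc is apportioned; so $a^{*}(T)$ should be left to the flow.
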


\begin{proof}
Recall equation (\ref{MLDE}), 
\[ 
\partial_{s} \log \Phi_{\tau} (z)
=   \sum\limits_{i=1}^3  (\partial_s t_i) Q \left( r_{|\tau|},y_\tau, \xi_i (\tau) , \Phi_{\tau} (z) \right) .
\]
As $|\tau (s) |=T$ and noting that $s, t_{+}(s),t_{-}(s)$ are real-valued, taking imaginary parts on both sides of this equation yields
\begin{equation*}
\partial_{s}  \mathrm{Arg} \left(  \Phi_{\tau (s) } (z) \right)
=  H \left( r_{T},y_{\tau (s) },  \Phi_{\tau (s) } (z), \theta (s) , \partial_s a (s)  \right) 
\end{equation*}
where $\theta(s)= \left(\beta(\tau (s) ),\xi_{+}( \tau (s)  ),\xi_{-}( \tau (s)  ) \right)$ and
\[ H(r,y,w,\theta,\lambda) =  I(r,y,w,\theta_{1})  - (1-\lambda) I(r,y,w,\theta_2) -  \lambda I(r,y,w,\theta_3) \]
for $\theta=(\theta_{1},\theta_{2},\theta_{3})$.

First of all, notice that when $s=0$, 
\[ \Omega_0 = \mathbb{D} \setminus \left( L \cup \left( f \circ \gamma_{+}  (0,t_{+}(0)] \right) \cup \left( f \circ\gamma_{-} (0,t_{-}(0) ] \right)  \right) \]
is a circularly slit domain and $r_T e^{i \xi_{+}(\tau(0))}$ and $r_T e^{i \xi_{-}(\tau(0))}$ will be mapped to the end points $ f  ( \gamma_{+}  (t_{+}(0) ))$, $f (\gamma_{-} ( t_{-}(0) ) )$ of the circular slit under the conformal map $f \circ \Phi^{-1}_{\tau(0)} $. By applying a rotation to $\Omega_{0}$, Lemma \ref{lemma:sym} implies that
\begin{equation*}
    \label{initialcondition}
    \xi_{+}(\tau(0))= 2\pi- \xi_-(\tau (0)).
\end{equation*}

Suppose that $\epsilon>0$ is sufficiently small and $s\in(0,T-\epsilon]$. 
Note that both $\gamma_{+}(t_{+}( s+ \epsilon ))$  and $\gamma_{-}(t_{-}(s+\epsilon )$ have two preimages under $\Phi_{\tau (s) }$,
We define $u (s) $ and $v (s)$ to be the preimage under $\Phi_{\tau (s)}^{-1}$ of $\gamma_{+}(t_{+}( s+ \epsilon ))$  and $\gamma_{-}(t_{-}(s+\epsilon )$ respectively such that 
\[ \mathrm{Arg} (u(s)),\mathrm{Arg} (v (s))\in (\xi_{-}(\tau (s)),\xi_{+}(\tau (s))) \subset [0,2\pi). \]
Again, by applying a rotation to $\Omega_{0}$, Lemma \ref{lemma:sym} implies that \begin{equation}
\label{initialcondition2}
     \mathrm{Arg} (u(0))  =   2\pi - \mathrm{Arg} (v (0)) . 
\end{equation}
Then by the chain rule,
\begin{align*}
  & \partial_{s} \mathrm{Arg} \left(  u(s) \right) \\
 = & H(r_{T},y_{\tau (s)},u(s),\theta(s), \partial_s a(s) )+ \mathrm{Im}\left[ \frac{\Phi_{\tau (s)}'(\gamma_{+}(t_{+}(s+\epsilon ) ))}{\Phi_{\tau (s)}(\gamma_{+}(t_{+}(s+\epsilon )))}\partial_{s} [\gamma_{+}(t_{+}(s+\epsilon))] \right] 
 \\
  & \partial_{s} \mathrm{Arg} \left(  v(s) \right) \\
 = & H(r_{T},y_{\tau (s)},v(s),\theta(s), \partial_s a(s) )+ \mathrm{Im}\left[ \frac{\Phi_{\tau (s)}'(\gamma_{-}(t_{-}(s+\epsilon ) ))}{\Phi_{\tau (s)}(\gamma_{-}(t_{-}(s+\epsilon )))}\partial_{s} [\gamma_{-}(t_{-}(s+\epsilon))] \right] 
\end{align*}

Note that $\Phi_{\tau (s)} (z)$ is locally $2$ to $1$ at $\gamma_{+} ( t_{+} (s) )$ and $\gamma_{-} ( t_{-} (s) )$. 
Hence, $\Phi_{\tau (s)}'(\gamma_{+}(t_{+}(s)))=0$ and $\Phi_{\tau (s)}'(\gamma_{-}(t_{-}(s)))=0$. So for small enough $\epsilon$, $\Phi_{\tau (s)}'(\gamma_{+}(t_{+}(s+\epsilon )))$ and $\Phi_{\tau (s)}'(\gamma_{-}(t_{-}(s+\epsilon )))$ are bounded. 
Also, note that $\Phi_{\tau (s) } (z) \neq 0 $ near $\gamma_{-} (0, t_{-} (s)] \cup \gamma_{+} (0,t_{+} (s) ]$. Thus, 
\[ \mathrm{Im}\left[ \frac{\Phi_{\tau (s) }'(\gamma_{+}(t_{+}(s+\epsilon )))}{\Phi_{\tau (s)}(\gamma_{+}(t_{+}(s+\epsilon )))}\partial_{s} [\gamma_{+}(t_{+}(s+\epsilon ))] \right] 
\]
and 
\[
\mathrm{Im} \left[ \frac{\Phi_{\tau (s)}'(\gamma_{-}(t_{-}(s+\epsilon)))}{\Phi_{\tau (s)}(\gamma_{-}(t_{-}(s+\epsilon )))} \partial_{s}[\gamma_{-}(t_{-}(s+\epsilon ))] \right]
\]
are  bounded. 

Also, note that $H(r,y,w,\theta,\lambda)$ is continuous with respect to each variable for $w \neq re^{i\theta_1},re^{i\theta_2},re^{i\theta_3}$.

When $\lambda=0$, $H(r,y,w,\theta,0)$ is bounded near $w= re^{i\theta_3}$, and has a simple pole at $w=re^{i\theta_2}$. The pole at $w=re^{i\theta_2}$ arises from the expression $\mathrm{Im} \left( \frac{w+re^{i\theta_2}}{w-re^{i\theta_2}} \right)$ coming from the term $-I(r,y,w,\theta_{2})$ in the definition of $H(r,y,w,\theta,\lambda)$. In particular, the pole at $w=re^{i\theta_2}$ has residue $2$. Hence, for any given $M>0$, we can find some $\epsilon>0$ such that $H(r_{T},y_{\tau (s)},u(s),\theta(s),0) < -M$ since $\mathrm{Arg} (u(s)) \in (\xi_{-}(\tau (s)),\xi_{+}(\tau (s)))$ so that $\mathrm{Arg} (u(s))$ approaches the pole at $\mathrm{Arg}(w)=\xi_{+}(\tau(s))$ from the left. This implies that, when $a(s)=0$, 
\[ \partial_{s} \mathrm{Arg} \left( u(s) \right) +\partial_{s} \mathrm{Arg} \left( v(s) \right) \rightarrow -\infty \text{ as } \epsilon\rightarrow 0\]

Similarly, when $\lambda=1$, $H(r,y,w,\theta,1)$ is bounded near $w= re^{i\theta_2}$, and has a simple pole at $w=re^{i\theta_3}$. 
Again, the pole at $w=re^{i\theta_3}$ has residue $2$. Hence, for any given $M>0$, we can find some $\epsilon>0$ such that $H(r_{T},y_{\tau (s)},v(s),\theta(s),0) >M$ since $\mathrm{Arg} (v(s)) \in (\xi_{-}(\tau (s)),\xi_{+}(\tau (s)))$ so that $\mathrm{Arg} (v(s))$ approaches the pole at $\mathrm{Arg}(w)=\xi_{-}( \tau (s)  )$ from the right.  This implies that, when $a(s)=1$,
\[ \partial_{s} \mathrm{Arg} \left( u(s) \right) +\partial_{s} \mathrm{Arg} \left( v(s) \right) \rightarrow \infty \text{ as } \epsilon\rightarrow 0\]

Consequently, the intermediate value theorem implies that, for each $s\in[0,T]$, we can find $\lambda_{\epsilon}(s)\in [0,1]$ such that
\[ \partial_{s} \mathrm{Arg} \left( u(s) \right) +\partial_{s} \mathrm{Arg} \left( v(s) \right) =0\]
Hence, with 
\[a(s)=\int_{0}^{s}\lambda_{\epsilon}(s) ds,\] 
and using equation (\ref{initialcondition2}), we have
\begin{equation}
\label{eq:fixed}
\mathrm{Arg} \left( u(s) \right)=2\pi-\mathrm{Arg} \left( v(s) \right) \text{ for all } s.
\end{equation}

Let $\lambda^{*}(s) $ be the pointwise limit of $\lambda_\epsilon (s)$ as $\epsilon \to 0$. For all $s\in [0,T]$, $0 \leq \lambda_{\epsilon}(s) \leq 1$ and hence $0\leq \lambda^* (s)\leq 1$. Moreover, $\lambda^* (s)$ is integrable by the dominated convergence theorem. Define 
\[ a^* (s) = \int_0^s \lambda^* (s)  ds. \] 
Then with $a(s)=a^{*}(s)$ and using equation (\ref{eq:fixed}), we have
\[ \xi_{+}(\tau (s))=2\pi-\xi_{-}(\tau (s)) \text{ for all } s.\]
\end{proof}

As a consequence of Lemma \ref{lemma:Fixing}, we obtain the following key result.
\begin{proposition}
\label{thm:key}
If $| \tilde{\gamma}(t)| > y_0$ for all $t \in (0,T]$, we have $y_T > y_0$.
\end{proposition}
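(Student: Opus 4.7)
The plan is to prove this via strict monotonicity of $y_{\tau(s)}$ in $s\in[0,T]$, where $\tau(s)=(s,t_{+}(s),t_{-}(s))$ is the parametrization from Section \ref{subsec:proof:yt} with the function $a^{*}(s)$ supplied by Lemma \ref{lemma:Fixing}, and then reading off the desired inequality from the boundary values. At $s=0$, the slit domain $A_{r_{0}}\setminus(\gamma_{+}(0,t_{+}(0)]\cup\gamma_{-}(0,t_{-}(0)])$ is mapped by $f$ onto the circularly slit disk $\mathbb{D}\setminus L_{0}^{*}$, whose slit is an arc of radius $y_{0}$; Lemma \ref{Lemma1} therefore gives $y_{\tau(0)}=y_{0}$. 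At $s=T$ we have $\tau(T)=(T,0,0)$, so $\Phi_{\tau(T)}$ reduces to the single-slit Loewner map on $A_{r_{0}}\setminus\gamma(0,T]$ from Section \ref{sect:prelim:subsect:LDE} and $y_{\tau(T)}=y_{T}$. It therefore suffices to show $\partial_{s}y_{\tau(s)}>0$ for a dense set of $s\in(0,T)$.

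Using $\partial_{s}t_{+}=-(1-\partial_{s}a^{*})$ and $\partial_{s}t_{-}=-\partial_{s}a^{*}$ together with the multi-slit identity (\ref{MLDE2}), the chain rule gives
\[
\partial_{s}\log y_{\tau}
= P(r_{T},y_{\tau},\beta(\tau))
-(1-\partial_{s}a^{*})\,P(r_{T},y_{\tau},\xi_{+}(\tau))
-\partial_{s}a^{*}\,P(r_{T},y_{\tau},\xi_{-}(\tau)).
\]
By Lemma \ref{lemma:Fixing}, $\xi_{-}(\tau)=2\pi-\xi_{+}(\tau)$, and part 1 of Lemma \ref{lem:P} therefore collapses the last two terms and eliminates the $\partial_{s}a^{*}$-dependence, leaving
\[
\partial_{s}\log y_{\tau}
= P(r_{T},y_{\tau},\beta(\tau))-P(r_{T},y_{\tau},\xi_{+}(\tau)).
\]
By Lemma \ref{lem:theta}, it now suffices to establish the angular inequality $|\pi-\beta(\tau(s))|<|\pi-\xi_{+}(\tau(s))|$ for $s\in(0,T]$.

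To prove the angular inequality I pass to the $\mathbb{D}$-picture via $\Psi_{s}:=f\circ\Phi_{\tau(s)}^{-1}\colon A_{r_{T}}\to\mathbb{D}\setminus(L_{s}^{*}\cup\widetilde{\gamma}(0,s])$, where $L_{s}^{*}=L\cup f(\gamma_{+}[0,t_{+}(s)])\cup f(\gamma_{-}[0,t_{-}(s)])$. By Lemma \ref{lemma:sym} and the choice of $a^{*}$, $L_{s}^{*}$ is a circular arc of radius $y_{0}$ symmetric about $\mathbb{R}$; by hypothesis $\widetilde{\gamma}$ lies in $\{|z|>y_{0}\}$ and hence never crosses $L_{s}^{*}$. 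The points $r_{T}e^{i\xi_{\pm}(\tau)}$ are the $\Psi_{s}$-preimages of the two tips of $L_{s}^{*}$ and, by $\xi_{+}(\tau)+\xi_{-}(\tau)=2\pi$, are mirror images across the real axis on which $y_{\tau}>0$ sits. Tracking the orientation-preserving boundary correspondence of $\Phi_{\tau}$ on the inner boundary component, the cyclic order of the three tips on $C_{r_{T}}$ is $\xi_{+}(\tau),\beta(\tau),\xi_{-}(\tau)$, with $\beta(\tau)$ lying on the sub-arc of $C_{r_{T}}$ corresponding to the "outer" side of $L_{s}^{*}$ in $\mathbb{D}$ (the side from which $\widetilde{\gamma}$ emerges). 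Because the gap in $L_{s}^{*}$ is centred on the positive real axis, the outer-side sub-arc of $C_{r_{T}}$ is precisely the one containing the angle $\pi$; combined with the symmetry $\xi_{-}=2\pi-\xi_{+}$ this forces $\beta(\tau)$ strictly between $\xi_{+}$ and $\xi_{-}$ on the $\pi$-side, yielding $|\pi-\beta(\tau)|<|\pi-\xi_{+}(\tau)|$.

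The main obstacle is precisely this last cyclic-order/angular step: one must carefully rule out, using the reflection symmetry of $L_{s}^{*}$, the normalization $y_{\tau}>0$, and the hypothesis $|\widetilde{\gamma}|>y_{0}$, that the image $\beta(\tau(s))$ of the tip of $\gamma$ ever drifts onto the $0$-side of $C_{r_{T}}$. A natural way to close the argument is by continuity in $s$: at $s=0^{+}$, $\Phi_{\tau(0)}$ is symmetric across $\mathbb{R}$ and one can read off directly that $\beta(\tau(0))$ lies on the $\pi$-side; and $\beta(\tau(s))$ can only coincide with $\xi_{\pm}(\tau(s))$ if $\widetilde{\gamma}(s)$ touches an endpoint of $L_{s}^{*}$, which is forbidden by $|\widetilde{\gamma}|>y_{0}$. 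Combining the strict monotonicity of $y_{\tau(s)}$ with $y_{\tau(0)}=y_{0}$ and $y_{\tau(T)}=y_{T}$ then delivers $y_{T}>y_{0}$.
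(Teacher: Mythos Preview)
Your argument follows the paper's proof essentially verbatim: fix $a=a^{*}$ from Lemma~\ref{lemma:Fixing}, read off the endpoint values $y_{\tau(0)}=y_{0}$ (via Lemma~\ref{Lemma1}) and $y_{\tau(T)}=y_{T}$, expand $\partial_{s}\log y_{\tau}$ using (\ref{MLDE2}), and conclude positivity from Lemma~\ref{lem:theta} once the angular inequality $|\pi-\beta(\tau)|<|\pi-\xi_{\pm}(\tau)|$ is established at $s=0$ and propagated by continuity (since $\beta$ can never equal $\xi_{\pm}$). Your collapse of the two $P$-terms via part~1 of Lemma~\ref{lem:P} is a clean shortcut over the paper's convex-combination form, but logically equivalent.

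One caveat on the angular step: the assertions that $L_{s}^{*}$ is symmetric about $\mathbb{R}$ and that $\Phi_{\tau(0)}$ is symmetric across $\mathbb{R}$ are not correct as stated---neither $L$ nor the pair of slits $\gamma_{\pm}(0,t_{\pm}(0)]$ need be reflection-symmetric in the original coordinates. What actually gives $\xi_{+}=2\pi-\xi_{-}$ (both at $s=0$, in the proof of Lemma~\ref{lemma:Fixing}, and subsequently) is that one first rotates the circularly slit disk $\Omega_{0}$ so that its slit becomes symmetric and \emph{then} applies Lemma~\ref{lemma:sym}; the symmetry lives in the $A_{r_{T}}$-picture, not in $\mathbb{D}$. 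Your underlying geometric intuition---that $\widetilde{\gamma}$ emerges from the outer side of the arc, and the outer side corresponds to the sub-arc of $C_{r_{T}}$ through angle $\pi$---is the right one, and the paper itself is equally terse here, simply asserting the inequality at $s=0$ and writing ``It follows that'' for all $s$.
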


\begin{proof}
Let $a(s)=a^*(s)$ where $a^{*}(s)$ is given in Lemma \ref{lemma:Fixing}.  When $s=0$, we have 
\[ 
y_{\tau (0)}=y_0
\] by Lemma \ref{Lemma1} as $L \cup f\circ\gamma_{+} [0,T-a(T)+a(0)] \cup f\circ\gamma_{-} [0,a(T)-a(0)]$ is  a circular arc. When $s=T$, we have $y_{(T,0,0)}=y_T$.  So it suffices to show that $\partial_{s}\log y_{\tau (s)} >0$.  By construction, we have
\[  2\pi - \xi_{-} (\tau (s)) =   \xi_{+} (\tau (s))  \]
for all $s \in [0,T] $. Note that we have $|\pi -\beta (\tau (0))| < |\pi -  \xi_{-} (\tau (0))|=|\pi -  \xi_{+} (\tau (0))|$. It follows that $|\pi -\beta (\tau (s))| < |\pi -  \xi_{-} (\tau (s)) |=|\pi -  \xi_{+} (\tau (s))|$ for all $s \in [0,T]$. Now note that equation (\ref{MLDE2}) can be rewritten as
\begin{align*}
\partial_{s}\log y_{\tau (s)} 
=&
\left(1-\partial_s a \right)
\left( P(r_{T},y_{\tau (s)}, \beta (\tau (s)) )-   P(r_{T},y_{\tau (s)},  \xi_{+} (\tau (s))) \right) 
\\ 
&
+\partial_s a \left(  P(r_{T},y_{\tau (s)}, \beta (\tau (s)) )-   P(r_{T},y_{\tau (s)},  \xi_{-} (\tau (s))) \right)
\end{align*}
As $0 \leq \partial_s a(s) \leq 1$ for all $s\in [0,T]$, Lemma \ref{lem:theta} implies that
\[\partial_{s}\log y_{\tau (s)} >0\]
Then the result follows.
\end{proof}
The above proposition allows us to prove Theorem \ref{MainResult3}. 

\begin{proof}[Proof of Theorem \ref{MainResult3}.] 
Define $E_{\min}= \left\lbrace z \in E : |z| \leq |w| \mbox{ for any $w\in E$} \right\rbrace $. So $E_{\min}$ consists of all the points in $E$ closest to the origin. Thus, $E_{min}$ is the union of circular arc(s) with the same radius $y_{0} \geq y$ and clearly $E_{min} \subset \partial E$. Note that either $E_{min} =E$ or $E_{min} \subsetneq E$. 

If $E_{min}=E$, then the connected set $E$ is a circular arc centered at $0$. In this situation, Lemma \ref{Lemma1} implies that $|g^{-1}(0)|=y_{0}$. 

If $E_{min} \subsetneq E$, then there are two subcases: either there exists a connected component $L$ of $E_{min}$ containing more than one point or $E_{min}$ is a set of disconnected points. 

Suppose that there is a connected component $L$ of $E_{min}$ containing more than one point. We first assume that $E \setminus L$ is a Jordan arc $\widetilde{\gamma}$. Then we can find $0<r_0<1$ such that $A_{r_0}$ has the same modulus as the domain $\mathbb{D} \setminus L$. Let $y_0>r_0$ and let $f(\cdot,y_0)$ be the conformal map of $A_{r_0}$ onto $\mathbb{D} \setminus L$ with $f(y_0,y_0)=0$. Define $\gamma : [0,T] \to \overline{A_{r_0}}$ to be the Jordan arc such that $\gamma (0) \in C_r$ and the image of $\gamma(0,T]$ under $f(\cdot,y_0)$ is $\widetilde{\gamma}$. In addition, $\gamma$ is parametrized such that $A_{r_0} \setminus \gamma (0,t]$ has modulus $-(\log r_0)+t$. We define $y_t$ as in Section \ref{sect:prelim:subsect:LDE}, namely $y_t=\Phi_t (y_0) $ where $\Phi_t$ is a conformal map from $A_{r_0} \setminus \gamma (0,t]$ onto $A_{r_t}$. Note that $\Phi_T=\Phi_{\tau (T)}$ and hence $y_T=y_{\tau (T)}$. Then by Proposition \ref{thm:key}, $y_T > y_0$. Since $g^{-1}(0)=y_T$, we have $|g^{-1}(0)| > y_0$. The case where $E \setminus L$ is not a Jordan arc follows from part 2 of Proposition \ref{dense}.

The final case where $E_{min}$ is a set of disconnected points follows from the previous case by letting the arc length of $L$ shrink to $0$. 

In all cases, $|g^{-1}(0)| \geq y_{0}\geq y$.

\end{proof}


\section{Proof of the Main Result}
\label{sect:proof}

Recall that the squeezing function is defined to be
\[ S_{A_r} (z) = \sup\limits_{f \in \mathcal{F}_{A_r}(z)}
\left\lbrace 
a
\: : \:
\mathbb{D}_{a} \subset  f(\Omega ) \subset \mathbb{D}
\right\rbrace. \]
where
\[ \mathcal{F}_{A_{r}}(z)=\left\lbrace 
f : \mbox{$f$ is a conformal map from $A_r$ to $\mathbb{C}$ such that $f(z)=0$.} 
\right\rbrace. \]
To simplify notation, we write $S_{r}(z)=S_{A_{r}}(z)$ and $\mathcal{F}_{r}(z)=\mathcal{F}_{A_{r}}(z)$. We have the following corollary of Theorem \ref{MainResult3}.
\begin{corollary} \label{reduce2}
Let 
\[ \widetilde{\mathcal{F}}_{r}(z) = \left\lbrace f \in \mathcal{F}_{r}(z) : f(A_r) \subset \mathbb{D}, f(\partial \mathbb{D})=\partial \mathbb{D} \right\rbrace \]
and define 
\[ \widetilde{S}_{r}(z) = \sup\limits_{f \in \widetilde{\mathcal{F}}_{r}(z)}
\left\lbrace a
\: : \:
\mathbb{D}_{a} \subset  f(\Omega )
\right\rbrace. \]
Then 
\[ \widetilde{S}_{r}(z) = |z| . \]
\end{corollary}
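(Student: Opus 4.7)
The plan is to prove the two inequalities $\widetilde{S}_{r}(z) \geq |z|$ and $\widetilde{S}_{r}(z) \leq |z|$ separately, using Crowdy's explicit slit map for the lower bound and Theorem \ref{MainResult3} for the upper bound.

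For the lower bound, I would exhibit an explicit element of $\widetilde{\mathcal{F}}_{r}(z)$ realising the value $|z|$. Take Crowdy's map $f(\cdot,z)$ from Theorem \ref{CrowdyConformal} with $y=z$: this is a conformal map of $A_{r}$ onto a circularly slit disk $\mathbb{D}\setminus L$ with $f(z,z)=0$ and $f(\partial\mathbb{D})=\partial\mathbb{D}$, so $f(\cdot,z)\in\widetilde{\mathcal{F}}_{r}(z)$. The explicit computation given right after Theorem \ref{CrowdyConformal} shows that $L$ lies on the circle of radius $|z|$. Since $L\subset C_{|z|}$, the open disk $\mathbb{D}_{|z|}$ is disjoint from $L$ and is contained in $\mathbb{D}\setminus L = f(A_{r})$. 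Therefore $a=|z|$ is admissible in the definition of $\widetilde{S}_{r}(z)$ for this particular $f$, giving $\widetilde{S}_{r}(z)\geq |z|$.

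For the upper bound, let $f\in\widetilde{\mathcal{F}}_{r}(z)$ be arbitrary and let $a$ be any value with $\mathbb{D}_{a}\subset f(A_{r})\subset\mathbb{D}$. Write $\widetilde{E}=\mathbb{D}\setminus f(A_{r})$, so $f(A_{r})=\mathbb{D}\setminus\widetilde{E}$ is doubly connected (as the conformal image of $A_{r}$), $\widetilde{E}$ is closed in $\mathbb{D}$, and $0\notin\widetilde{E}$ (since $f(z)=0\in f(A_{r})$). The hypothesis $\mathbb{D}_{a}\subset f(A_{r})=\mathbb{D}\setminus\widetilde{E}$ is equivalent to $|w|\geq a$ for every $w\in\widetilde{E}$, so $\widetilde{E}$ satisfies the hypotheses of Theorem \ref{MainResult3} with $y=a$. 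The map $f$ itself is a conformal map of $A_{r}$ onto $\mathbb{D}\setminus\widetilde{E}$ with $f(\partial\mathbb{D})=\partial\mathbb{D}$, so applying Theorem \ref{MainResult3} to $g:=f$ yields $|f^{-1}(0)|\geq a$, i.e. $|z|\geq a$. Taking the supremum over admissible $a$ and then over $f\in\widetilde{\mathcal{F}}_{r}(z)$ gives $\widetilde{S}_{r}(z)\leq |z|$.

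Combining the two inequalities gives $\widetilde{S}_{r}(z)=|z|$. All the real work has been done in Theorem \ref{MainResult3} (the ``hard'' direction) and in the explicit Schottky–Klein construction (the ``easy'' direction), so no further estimates are needed; the only thing to check carefully is the dictionary between the supremum conditions $\mathbb{D}_{a}\subset f(A_{r})$ and the hypothesis ``$|z|\geq y$ for $z\in\widetilde{E}$'' in Theorem \ref{MainResult3}, which is immediate from $\widetilde{E}=\mathbb{D}\setminus f(A_{r})$.
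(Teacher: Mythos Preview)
Your proof is correct and follows essentially the same approach as the paper: the lower bound comes from the circularly slit disk map (the paper cites Lemma~\ref{Lemma1}, which packages the radius computation you invoke directly), and the upper bound is Theorem~\ref{MainResult3} applied to $\widetilde{E}=\mathbb{D}\setminus f(A_{r})$. The only cosmetic difference is that the paper phrases the upper bound as a contradiction (assuming some $a^{*}>|z|$), whereas you argue directly that every admissible $a$ satisfies $a\leq|z|$.
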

\begin{proof}
The conformal map $f$ of $A_{r}$ onto a circularly slit disk with $z$ mapping to 0 is in $\widetilde{\mathcal{F}}_{r}(z)$. Lemma \ref{Lemma1} implies that $\widetilde{S}_r(z) \geq  |z|$. 

Now suppose that we can find $f^{*}\in \widetilde{F}_{r}(z)$ such that $\mathbb{D}_{a^{*}}\subset f^{*}(A_{r})$ for some $a^{*}>|z|$. Let $E$ be the bounded component of the complement of $f^{*}(A_{r})$ in $\mathbb{C}$. Then $|w|\geq a^{*}$ for all $w\in E$. Theorem \ref{MainResult3} implies that, $|(f^{*})^{-1}(0)|\geq a^{*}$ which is a contradiction since $(f^{*})^{-1}(0)=z$.
\end{proof}

It now remains to prove Theorem \ref{MainResult1}. For any bounded doubly-connected domain $\Omega$, $\partial\Omega$ has two connected components: we denote the component that separates $\Omega$ from $\infty$  (i.e. the outer boundary) by $\partial^{o}\Omega$; we denote the other component (i.e. the inner boundary) by $\partial^{i}\Omega$. We decompose the family $\mathcal{F}_{r}(z)$ into two disjoint subfamilies

\[\mathcal{F}^{1}_{r}(z)=\left\{ f\in \mathcal{F}_r(z): f(\partial\mathbb{D})=\partial^{o}f(A_{r})\right\}\]  and
\[\mathcal{F}^{2}_{r}(z)=\left\{ f\in \mathcal{F}_r(z): f(\partial\mathbb{D})=\partial^{i}f(A_{r})\right\}.\] 
$\mathcal{F}^{1}_{r}(z)$ consists of functions that map outer boundary to outer boundary; $\mathcal{F}^{2}_{r}(z)$ consists of functions that interchange inner and outer boundary. We will consider a squeezing function on each subfamily separately. Define
\[ S^{1}_{r} (z) = \sup\limits_{f \in \mathcal{F}^{1}_{r}(z)}
\left\lbrace 
a
\: : \:
\mathbb{D}_{a} \subset  f(A_{r}) \subset \mathbb{D}
\right\rbrace \]
and
\[ S^{2}_{r} (z) = \sup\limits_{f \in \mathcal{F}^{2}_{r}(z)}
\left\lbrace 
a
\: : \:
\mathbb{D}_{a} \subset  f(A_{r} ) \subset \mathbb{D}
\right\rbrace. \]
Then the squeezing function satisfies $S_{r}(z)=\max\{S^{1}_{r}(z),S^{2}_{r}(z)\}$.
\begin{lemma}\label{reduce1}
\[S^{1}_{r}(z)=S^{2}_{r}\left(\frac{r}{z}\right)\]
\end{lemma}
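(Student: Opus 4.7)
The plan is to use the conformal automorphism $\psi(w)=r/w$ of the annulus $A_{r}$, which swaps the two boundary circles: $\psi(\partial\mathbb{D})=C_{r}$ and $\psi(C_{r})=\partial\mathbb{D}$. Observe that $\psi$ is an involution on $A_{r}$ and $\psi(z)=r/z$ sends $z$ to $r/z$. The idea is that precomposition with $\psi$ realises a bijection between $\mathcal{F}^{1}_{r}(z)$ and $\mathcal{F}^{2}_{r}(r/z)$ under which the image in $\mathbb{C}$ is unchanged, so the suprema defining $S^{1}_{r}(z)$ and $S^{2}_{r}(r/z)$ coincide.

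More explicitly, I would first check that the map $\Psi : f \mapsto f\circ\psi$ sends $\mathcal{F}^{1}_{r}(z)$ into $\mathcal{F}^{2}_{r}(r/z)$. If $f\in\mathcal{F}^{1}_{r}(z)$, then $g:=f\circ\psi$ is a conformal map of $A_{r}$ with $g(r/z)=f(\psi(r/z))=f(z)=0$, and $g(\partial\mathbb{D})=f(\psi(\partial\mathbb{D}))=f(C_{r})$. Since $f$ maps $\partial\mathbb{D}$ to the outer boundary of $f(A_{r})$, it must map $C_{r}$ to the inner boundary, so $g(\partial\mathbb{D})=\partial^{i}g(A_{r})$, hence $g\in\mathcal{F}^{2}_{r}(r/z)$. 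Because $\psi$ is an involution, the same construction applied to $\mathcal{F}^{2}_{r}(r/z)$ gives the inverse map, so $\Psi$ is a bijection between the two families.

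Next, since $\psi$ is an automorphism of $A_{r}$, we have $(f\circ\psi)(A_{r})=f(A_{r})$ for every $f$. Therefore, for any $a>0$, the condition $\mathbb{D}_{a}\subset f(A_{r})\subset\mathbb{D}$ holds if and only if $\mathbb{D}_{a}\subset (f\circ\psi)(A_{r})\subset\mathbb{D}$. Taking suprema over $f\in\mathcal{F}^{1}_{r}(z)$ on the left and over $g=f\circ\psi\in\mathcal{F}^{2}_{r}(r/z)$ on the right yields
\[ S^{1}_{r}(z) = S^{2}_{r}\!\left(\frac{r}{z}\right). \]
This is essentially a one-line bookkeeping argument once the involution $\psi$ is identified, so I do not anticipate any genuine obstacle; the only point to be careful about is verifying that $\psi$ really does swap the two boundary components (which is immediate from $|\psi(w)|=r/|w|$) so that $\Psi$ interchanges the two subfamilies correctly.
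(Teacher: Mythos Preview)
Your argument is correct and is precisely the paper's approach: the paper's proof is the single sentence that $f\in\mathcal{F}^{1}_{r}(z)$ if and only if $f\circ\rho\in\mathcal{F}^{2}_{r}(r/z)$ where $\rho(w)=r/w$, and you have simply written out the verification of this in detail.
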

\begin{proof}
This follows from the fact that $f\in\mathcal{F}^{1}_{r}(z)$ if any only if $ f \circ \rho \in \mathcal{F}^{2}_{r}(\frac{r}{z})$ where $\rho(z)=\frac{r}{z}$.
\end{proof}

\begin{proof}[Proof of Theorem \ref{MainResult1}.]
By Corollary \ref{reduce2} and Lemma \ref{reduce1}, it is sufficient to prove that $S_{r}^{1}(z)=\widetilde{S}_{r}(z)$. First we note that $\widetilde{\mathcal{F}}_{r}(z)\subset \mathcal{F}_{r}^{1}(z)$ and hence $\widetilde{S}_{r}(z)\leq S^{1}_{r}(z)$.

Since $\mathcal{F}^{1}_{r}(z)$ is a normal family of holomorphic functions, it follows easily that we can replace the $\sup$ in the definition of $S^{1}_{r}(z)$ with $\max$. Let $f\in \mathcal{F}^{1}_{r}(z)$ be any function that attains this maximum with corresponding $a$ i.e.
\[\mathbb{D}_{a}\subset f(A_{r})\subset \mathbb{D} \text{ and } a=S^{1}_{r}(z).\]
We denote by $\Omega$ the simply-connected domain satisfying $0 \in \Omega$ and $\partial \Omega = \partial^{o}f(A_{r})$. Note that $\mathbb{D}_{a}\subset\Omega\subset\mathbb{D}$. By the Riemann mapping theorem, we can find a conformal map $g$ of $\mathbb{D}$ onto $\Omega$ such that $g(0)=0$. Let  $F=g^{-1}\circ f$. Then $F\in \widetilde{\mathcal{F}}_{r}(z)$ and  
\[
\mathbb{D}_{a}\subset f(A_{r})
\Rightarrow 
g^{-1}(\mathbb{D}_{a})\subset F(A_{r}).\]
In addition, $g$ maps the unit disk into itself and so by the Schwarz lemma,
$g( \mathbb{D}_{a} )\subset \mathbb{D}_{a}$. Combining this with the above, we deduce that
\[\mathbb{D}_{a}\subset g^{-1}(\mathbb{D}_{a})\subset F(A_{r}).\]
 Hence $a\leq\sup\{\rho: \mathbb{D}_{\rho}\subset F(A_{r})\}$ which implies that $S^{1}_{r}(z)\leq \widetilde{S}_{r}(z)$. Therefore $S_{r}^{1}(z)=\widetilde{S}_{r}(z)$ as required.
\end{proof}

\section{The squeezing function on product domains in $\mathbb{C}^{n}$}
\label{sect:several}

It remains to prove Theorem \ref{thm:several}.
\begin{proof}[Proof of Theorem \ref{thm:several}] 
The squeezing function is scale invariant and hence in the definition of the squeezing function $S_{\Omega_{i}}(z_{i})$,  we can restrict the family $\mathcal{F}_{\Omega_{i}}(z_i)$ to 
\[ \mathcal{F}^{b}_{\Omega_{i}} ( z_i )=\{f\in\mathcal{F}_{\Omega_{i}}(z_{i}):|f(w)|< 1 \text{ for all } w\in\Omega_{i} \}.\] 
Since $\mathcal{F}^{b}_{\Omega_{i}}(z_i)$ is a normal family, one can easily show that there is a function $f_i$ in $\mathcal{F}^{b}_{\Omega_{i}}(z_i)$ attaining the supremum in the definition of $S_{\Omega_i}(z_i)$. By scaling, we can assume also that $\sup\{ \left| f_{i}(w) \right| : w\in\Omega_{i}\}=1$. Consider $\lambda_i = S_{\Omega_i}^{-1}(z_i)$ and $g(w)=(g_1(w_1) , \cdots , g_n (w_n))$ where $g_i = \lambda_i f_i$. Since $g_i$ are holomorphic and injective for all $i$, $g(w)$ is a holomorphic embedding of $\Omega$ into $\mathbb{C}^n$. Also, $f_{i}(z_{i})=0$ for each $i$ and thus $g\in\mathcal{F}_{\Omega}(z)$. 
Moreover, since $f_i$ attains the supremum in $S_{\Omega_i}(z_i)$, 
we have  $\mathbb{D}_{\lambda_i^{-1}} \subset f_i (\Omega_i) \subset \mathbb{D}$ and hence $\mathbb{D} \subset g_i (\Omega_i) \subset \mathbb{D}_{\lambda_i}$. It follows that 
\[\mathbb{D}^{n}\subset g (\Omega) \subset \mathbb{D}_{\lambda_1}\times\cdots\times \mathbb{D}_{\lambda_{n}}.\]
However,
$B(0;1)\subset \mathbb{D}^{n}$ and $\mathbb{D}_{\lambda_1}\times\cdots\times \mathbb{D}_{\lambda_{n}}\subset B(0;\Lambda)$ where \[\Lambda = \sqrt{\lambda_{1}^{2}+\cdots+\lambda_{n}^{2}}.\]
Hence
\[B(0;1)\subset g (\Omega) \subset B(0;\Lambda)\]
and we deduce that
\[
S_{\Omega} (z) \geq \frac{1}{\Lambda}=\left( S_{\Omega_1}^{-2}(z_1) + \cdots + S_{\Omega_n}^{-2}(z_n) \right)^{-1/2}.
\]
\end{proof}

\section{The squeezing function on multiply-connected domains}
\label{sect:conjecture}
We now discuss some future directions regarding the squeezing function on planar domains of higher connectivity. 

Let $\Omega \subset \mathbb{C}$ be a finitely connected domain with disjoint boundary components $\gamma_0 , \gamma_1, \cdots \gamma_n$. As a corollary of conformal equivalence for finitely connected regions \cite{conway2012functions}, every finitely connected domain with non-degenerate boundary components is conformally equivalent to a circular domain, (i.e., the unit disk with smaller disks removed). Thus we can assume that $\gamma_{0}$ is the unit circle and $\gamma_{1},\ldots, \gamma_{n}$ are circles contained inside the unit disk. In light of our results, for a fixed $z\in \Omega$, we propose that the function which attains the maximum in the extremal problem 
\[ 
\sup\limits_{f \in \mathcal{F}_{\Omega}(z)}
\left\lbrace 
\frac{a}{b} 
\: : \:
\mathbb{D}_a \subset  f(\Omega ) \subset \mathbb{D}_b
\right\rbrace.
\]
is given by the conformal map of $\Omega$ onto a circularly slit disk of the same connectivity (i.e. the unit disk with proper arcs of circles centered at $0$ removed).

 For $j=1,\ldots,n$, let $\mu_{j}$ be a M\"{o}bius transformation that interchanges $\gamma_{j}$ and the unit circle $\partial \mathbb{D}$ and let $\Omega_{j}=\mu_{j}(\Omega)$. We also let $\mu_{0}$ be the identity mapping and $\Omega_{0}=\Omega$. Then, for $j=0,\ldots,n$, let $f_{j}$ denote the conformal map of $\Omega_{j}$ onto a circularly slit disk with $f_{j}(\mu_{j}(z))=0$ and let 
$\mathrm{Rad}(\Omega_j)$ be the minimum of the radii of the circular arcs in $f_{j}(\Omega_{j})$. Note that $\mathrm{Rad}(\Omega_{j})$ does not depend on the choice of $\mu_{j}$ or $f_{j}$ (by 
Theorem 6.2 in \cite{conway2012functions}). We make the following conjecture regarding the squeezing function on $\Omega$.

\begin{flushleft}
\textbf{Conjecture \:}
\textit{ $S_{\Omega} (z) = \max \{ \mathrm{Rad}(\Omega_j) \: : \: j=0,1,\cdots, n\}$.}
\end{flushleft}
The Schottky-Klein prime function $\omega(\cdot,\cdot)$ can be defined on domains of connectivity $n$ in terms of the Schottky Group of $\Omega$ (see \cite{crowdy2011schottky}).
By \cite{crowdy2005schwarz}, the same expression in Theorem \ref{CrowdyConformal},
\[ f( \cdot , z ) = \dfrac{\omega( \cdot ,z )}{|z| \omega( \cdot ,\overline{z}^{-1})}
 \]
gives the formula for the conformal map of $\Omega$ onto a circularly slit disk mapping $z$ to $0$. In addition, an expression for $\mathrm{Rad}(\Omega)$ is also given in \cite{crowdy2005schwarz}.

Recently, B\"ohm and Lauf \cite{bohm} have obtained an expression for a version of the Loewner differential equation on $n$-connected circularly slit disks. Using Crowdy's version of the Schwarz-Christoffel formula for multiply-connected domains (given in \cite{crowdy2005schwarz}), it should be possible to express the Loewner differential equation in terms of the Schottky-Klein prime function. It is anticipated that the methods in our paper could then be used to prove this conjecture.

\bigskip
\paragraph{Acknowledgments:}
The first author was partially supported by the RGC grant 17306019. The second author was partially supported by a HKU studentship. 

\bibliographystyle{spmpsci}   
\bibliography{reference}

\label{pg:reference}

\end{document}